\newcounter{myenum}
\newcounter{myenumbak}
\newcommand{\la}{\langle}
\newcommand{\ra}{\rangle}
\newcommand{\sech}{\operatorname{sech}}
\newcommand{\defeq}{\stackrel{\rm{def}}{=}}
\newcommand{\spn}{\operatorname{span}}
\newtheorem{theorem}{Theorem}
\newtheorem{lemma}[theorem]{Lemma}
\theoremstyle{remark}
\newtheorem{remark}[theorem]{Remark}
\numberwithin{equation}{section}
\numberwithin{theorem}{section}
\numberwithin{table}{section}
\numberwithin{figure}{section}
\title
{Soliton dynamics for a non-Hamiltonian perturbation of mKdV}
\author{Quanhui Lin}
\address{Brown University}
\begin{document}

\maketitle

\begin{abstract}
We study the dynamics of soliton solutions to the perturbed mKdV equation $\partial_t u = \partial_x(-\partial_x^2 u -2u^3) + \epsilon V u$, where $V\in \mathcal{C}^1_b(\mathbb{R})$, $0<\epsilon\ll 1$.  This type of perturbation is non-Hamiltonian.  Nevertheless, via symplectic considerations, we show that solutions remain $O(\epsilon \la t\ra^{1/2})$ close to a soliton on an $O(\epsilon^{-1})$ time scale.  Furthermore, we show that the soliton parameters can be chosen to evolve according to specific exact ODEs on the shorter,  but still dynamically relevant, time scale $O(\epsilon^{-1/2})$.  Over this time scale, the perturbation can impart an $O(1)$ influence on the soliton position.
\end{abstract}

\section{Introdution}

We consider the modified Korteweg-de Vries (mKdV) equation with a
small external potential
\begin{equation}
\label{E:pmkdv} \partial_t u = \partial_x(-\partial_x^2 u -2u^3) +
\epsilon V u \,.
\end{equation}
where $0<\epsilon\ll 1$, $V\in \mathcal{C}^1_b(\mathbb{R})$, i.e.
$V$ and $V'$ are continuous and bounded.

The unperturbed case of
\eqref{E:pmkdv},
\begin{equation}
\label{E:mkdv}
\partial_t u = \partial_x(-\partial_x^2 u -2u^3)
\end{equation}
is globally well-posed in $H^k$ for $k\geq 1$ (see Kenig-Ponce-Vega
\cite{KPV}), and possesses single soliton solutions $u(x,t) =
\eta(x,a+c^2t,c)$, for $a\in\mathbb{R}$ and
$c\in\mathbb{R}\,\backslash\,\{0\}$, where $\eta(x,a,c)=c Q(c(x-a))$
with $Q(x)=\sech(x)$ (so that $-Q+Q''+2Q^3 = 0$).   The solitons are
orbitally stable  as solutions to the unperturbed mKdV
\eqref{E:mkdv} (see \cite{Be, Bo, W2, BSS}), i.e. the solutions stay
close to the soliton manifold
$$M = \{ \; \eta(x,a,c) \; | \; a\in \mathbb{R}\,, c>0 \, \}$$
if they are initially close.

Our first main result, Theorem \ref{T:main1}, shows that this type
of orbital stability remains true for the \emph{structurally}
perturbed mKdV  \eqref{E:pmkdv}, in the following sense:  solutions
which start an $H_x^1$ distance $\omega$ from the soliton manifold
$M$ remain within an $H_x^1$ distance $(\omega +\epsilon
t^{1/2})e^{C\epsilon t}$ up to time $\epsilon^{-1}\log
\epsilon^{-1}$.  Our second main result result, Theorem
\ref{T:main}, shows that on the shorter time scale
$\epsilon^{-1/2}\log \epsilon^{-1}$, we can predict the location on
the soliton manifold by solving a system of two ODE for the position
parameter $a$ and scale parameter $c$.  Strong agreement between
this prediction and the numerical solution of \eqref{E:pmkdv} is
illustrated in Fig.~\ref{F:fig1} and Fig.~\ref{F:fig2}. We prove the
global well-posedness of \eqref{E:pmkdv} in $H^1_x$, by adapting the
argument of Kenig-Ponce-Vega \cite{KPV},  in Apx.~\ref{A:wp}.

The forced KdV equation
\begin{equation}
\label{E:fkdv}
\partial_t u = \partial_x(-u_{xx}-3u^2) + \epsilon f
\end{equation}
is a model for free-surface shallow water flow \cite{LYW} with
contributions to $f$ arising from surface pressure and bottom
topography.  Numerics and experiments discussed in \cite{LYW} show
that this type of perturbation can effect the evolution of a single
soliton by generating a procession of small solitons ahead of, and
dispersive waves behind, the primary soliton.

Both \eqref{E:pmkdv} and \eqref{E:fkdv} are specific instances of a
family of gKdV equations with general perturbation
$$\partial_t u = \partial_x(-u_{xx}-u^p) + \epsilon f$$
for $p\in \mathbb{N}$, $p\geq 2$, and $f=f(x,t,u)$.  The case $p=3$
(mKdV) is the unique member of the gKdV family that avoids a certain
anomaly with the symplectic structure.  Specifically, for $p=3$, one
has $\partial_x^{-1}\partial_c \eta \in L^2$ but this fails for
$p\neq 3$.  For $p=3$, one can symplectically project onto the
tangent space of the soliton manifold $M$ rather than on a skew
space.  The difference between $p=3$ and $p\neq 3$ is illustrated in
the fact that the local virial estimate of Martel-Merle \cite{MM1}
simplifies for $p= 3$.  Nevertheless, we believe that the analysis
of the paper carries over in some form to $p\neq 3$ and more general
$f$ of the form $f(x,t,u)$.  We chose \eqref{E:pmkdv} as the
mathematically simplest case in which to illustrate our method.

\subsection{Statements of main results}

\begin{theorem}[orbital stability]
\label{T:main1} Let $\delta>0$ and $a_0,c_0\in\mathbb{R}$ such that
$2\delta \leq  c_0 \leq  (2\delta)^{-1}$.  Suppose $u(x,t)$ solves
\eqref{E:pmkdv} with initial data $u(x,0)$ such that
$$\omega \defeq \|u(x,0) - \eta(x,a_0,c_0)\|_{H_x^1} \lesssim \epsilon^{1/2}$$
Then there exist trajectories $a(t)$ and $c(t)$ so that the
following hold, where $T$ is the maximum time such that $\delta\leq
c(t) \leq \delta^{-1}$ for all $0\leq t \leq T$ and $w(x,t) \defeq
u(x,t) - \eta(x,a(t),c(t))$.  First, we have the following bounds on
the deviation $w$:
\begin{equation}
\label{E:thm1-est-w} \|w\|_{L^\infty_{[0,t]}H^1_x} +
\|e^{-\alpha|x-a|}w\|_{L^2_{[0,t]}H^1_x} \leq C(\omega + \epsilon
t^{1/2}) e^{C\epsilon t}
\end{equation}
Second, we have $T \geq C^{-1}\epsilon^{-1}$ and the
following estimates for the trajectories $a(t)$ and $c(t)$:
\begin{equation}
\label{E:thm1-est-par} \|\dot a - c^2 - \epsilon c^{-1} \la V\eta,
(x-a)\eta\ra\|_{L_{[0,t]}^1\cap L_{[0,t]}^\infty} + \|\dot c -
\epsilon\la V\eta, \eta\ra \|_{L_{[0,t]}^1\cap L_{[0,t]}^\infty}\leq
C (\omega + \epsilon t^{1/2})^2 e^{C \epsilon t}
\end{equation}
The constants $C$ in \eqref{E:thm1-est-w}, \eqref{E:thm1-est-par}
depend on $\|V\|_{\mathcal{C}^1}$ and $\delta$.
\end{theorem}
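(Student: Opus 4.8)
The plan is to run a modulation-plus-energy argument adapted to the non-Hamiltonian forcing, with the Martel-Merle virial monotonicity supplying the localized control that closes the estimate. First I would set up the geometric decomposition. Because $p=3$ guarantees $\partial_x^{-1}\partial_c\eta\in L^2$, the symplectic form $\omega(f,g)=\la\partial_x^{-1}f,g\ra$ is finite on the tangent space $\spn\{\partial_a\eta,\partial_c\eta\}$, so I can impose the two symplectic orthogonality conditions $\omega(w,\partial_a\eta)=0$ and $\omega(w,\partial_c\eta)=0$. Using $\partial_a\eta=-\partial_x\eta$, hence $\partial_x^{-1}\partial_a\eta=-\eta$, the first reduces to $\la w,\eta\ra=0$, and the second to $\la w,\partial_x^{-1}\partial_c\eta\ra=0$. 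By the implicit function theorem and the smallness hypothesis $\omega\lesssim\epsilon^{1/2}$, these define $\mathcal{C}^1$ trajectories $a(t),c(t)$ with $w\defeq u-\eta(\cdot,a,c)$ orthogonal to the tangent directions, as long as $c(t)$ remains in $[\delta,\delta^{-1}]$.

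Second, I would differentiate the orthogonality conditions in $t$, substitute the PDE for $u$, and invert the (nondegenerate, by the $p=3$ symplectic structure) $2\times 2$ matrix of pairings. The soliton contribution produces the leading terms $\dot c=\epsilon\la V\eta,\eta\ra$ and $\dot a=c^2+\epsilon c^{-1}\la V\eta,(x-a)\eta\ra$, while the remaining terms are either bilinear in $(w,\epsilon V)$ or quadratic in $w$, hence $O((\omega+\epsilon t^{1/2})^2)$; this is precisely \eqref{E:thm1-est-par}. In particular $\dot c=O(\epsilon)$ keeps $c(t)$ inside $[\delta,\delta^{-1}]$ for $t\lesssim\epsilon^{-1}$, which yields $T\geq C^{-1}\epsilon^{-1}$.

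Third, for the deviation I would employ the Weinstein Lyapunov functional $\mathcal{F}(t)=E(u)+c(t)^2 M(u)$ built from the mKdV energy $E(u)=\int(\tfrac12 u_x^2-\tfrac12 u^4)$ and mass $M(u)=\tfrac12\int u^2$. Since $\eta$ is a critical point of $E+c^2M$, to leading order $\mathcal{F}$ equals $\tfrac12\la\mathcal{L}_c w,w\ra$ with $\mathcal{L}_c=-\partial_x^2+c^2-6\eta^2$, and the classical spectral coercivity $\la\mathcal{L}_c w,w\ra\gtrsim\|w\|_{H^1}^2$ holds exactly on the complement fixed in the first step, giving $\|w\|_{H^1}^2\lesssim\mathcal{F}$. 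Differentiating $\mathcal{F}$, the non-Hamiltonian forcing makes $E,M$ non-conserved, so $\tfrac{d}{dt}\mathcal{F}$ collects the fluxes $\epsilon\int Vu^2$ and its energy analog, the term $\dot c\cdot 2cM$, and the modulation remainders, splitting into a constant forcing of size $O(\epsilon)$ together with cross terms $O(\epsilon\|w\|_{H^1})$ and cubic terms. To convert the linear-in-$w$ forcing into the claimed $\epsilon t^{1/2}$ rather than secular $\epsilon t$ growth, I would run the localized virial estimate in the shifted variable $x-a(t)$, which for $p=3$ takes a simplified form and produces the weighted space-time bound $\|e^{-\alpha|x-a|}w\|_{L^2_{[0,t]}H^1_x}$; pairing the forcing against this norm via Cauchy--Schwarz in time yields the $\epsilon t^{1/2}$ contribution, while the $O(\epsilon\mathcal{F})$ terms give $e^{C\epsilon t}$ through Gr\"onwall. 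A standard continuity/bootstrap argument then propagates \eqref{E:thm1-est-w} on $[0,T]$.

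The main obstacle is that $\epsilon V u$ destroys the conservation laws, so $\mathcal{F}$ is genuinely time-dependent and picks up forcing at every order. The delicate point is arranging that this forcing enters the energy balance only linearly in $w$, so that it can be absorbed by the virial norm and yields $\epsilon t^{1/2}$ rather than $\epsilon t$; this in turn requires the tangent-space---rather than skew---symplectic projection available only for $p=3$, which keeps the modulation matrix nondegenerate and the coercivity clean. Coupling the virial monotonicity, which lives in the moving frame $x-a(t)$, to the modulation equations without degrading the exponential weights is the technical heart of the argument.
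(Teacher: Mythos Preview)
Your strategy matches the paper's: symplectic modulation to define $(a,c)$, coercivity of $\mathcal{L}_c$ on the symplectic orthogonal complement, the Martel--Merle local virial estimate for the weighted space-time norm, and a bootstrap/Gr\"onwall to close. Those ingredients are correctly identified, and the modulation and virial steps are fine.

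There is, however, a gap in the energy step. You assert that $\partial_t\mathcal{F}$ contains ``a constant forcing of size $O(\epsilon)$''. If that were so, integration would give $\mathcal{F}(t)\gtrsim\epsilon t$ and hence $\|w\|_{H^1}\gtrsim(\epsilon t)^{1/2}$, which is worse than the claimed $\epsilon t^{1/2}$ by a factor $\epsilon^{-1/2}$ and does not close on the $\epsilon^{-1}$ time scale. In fact no such constant survives: if you insist on $\mathcal{F}=E(u)+c^2M(u)$ you must first subtract the soliton value $E(\eta)+c^2M(\eta)=\tfrac23 c(t)^3$ before invoking coercivity, and then the $O(\epsilon)$ pieces from $\epsilon\la E'(\eta),V\eta\ra$, from $c^2\epsilon\int V\eta^2$, and from $2c\dot c\,M(\eta)$ cancel exactly against $-2c^2\dot c$, precisely because $\eta$ is critical for $E+c^2M$ and $\dot c=\epsilon\la V\eta,\eta\ra+O(w)$. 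The paper sidesteps this bookkeeping by computing $\partial_t$ of $\mathcal{E}=\tfrac12\la\mathcal{L}w,w\ra-2\int\eta w^3-\tfrac12\int w^4$ directly. A second, equally important imprecision: the surviving linear-in-$w$ term is not $O(\epsilon\|w\|_{H^1})$ but the \emph{localized} $O(\epsilon\|e^{-\alpha|x-a|}w\|_{H^1})$, because it arises as $\la\mathcal{L}w,\Pi^\perp(\epsilon V\eta)\ra$ after the tangential part $\la\mathcal{L}w,\Pi F_0\ra$ is killed via $\mathcal{L}\partial_a\eta=0$, $\mathcal{L}\partial_c\eta=-2c\eta$, and the orthogonality $\la w,\eta\ra=0$. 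Without that localization you cannot pair against the virial quantity $\mathcal{V}(t)^{1/2}$ by Cauchy--Schwarz in time, and you would again obtain only $(\epsilon t)^{1/2}$ rather than $\epsilon t^{1/2}$.
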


We remark that the same result holds for $c_0<0$, since $\eta(x,a,-c)=-\eta(x,a,c)$.

\begin{theorem}[exact predictive dynamics]
\label{T:main}
Suppose $u(x,t)$ solves \eqref{E:pmkdv} with initial data $u(x,0)$ satisfying
$$\omega \defeq \|u(x,0) - \eta(x,a_0,c_0)\|_{H_x^1} \lesssim \epsilon^{1/2}$$
where $c_0>0$.  Let $(a(t),c(t))$ evolve according to the ODE system
\begin{equation}
\label{E:thm-dym}
\left\{
\begin{aligned}
& \dot a = c^2 + \epsilon c^{-1}\la V\eta, (x-a)\eta\ra \\
&\dot c = \epsilon\la V\eta, \eta\ra
\end{aligned}
\right.
\end{equation}
with initial data $a(0)=a_0$, $c(0)=c_0$.
Then for $$0\leq t\leq T=\sigma\epsilon^{-1/2}\log\epsilon^{-1}\,,
\qquad\sigma=\sigma(c_0,\|V\|_{\mathcal{C}^1_b})>0\,,$$ we have the
following estimates with $w(x,t)=u(x,t)-\eta(x,a(t),c(t))\,:$
\begin{equation}
\label{E:thm-est-w}
\|w\|_{L^\infty_{[0,t]}H^1_x}+ \|e^{-\alpha|x-a|}w\|_{L^2_{[0,t]}H^1_x}\leq
C( \omega + \epsilon  t^{1/2}) e^{C\epsilon^{1/2}t}\,.
\end{equation}
where $C=C(c_0,\|V\|_{\mathcal{C}^1_b})$.
\end{theorem}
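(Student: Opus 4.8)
The plan is to treat Theorem~\ref{T:main} as a perturbation of Theorem~\ref{T:main1}: the modulation parameters produced there already satisfy the system \eqref{E:thm-dym} up to a quadratically small error, so it should suffice to compare the \emph{exact} solution of \eqref{E:thm-dym} against those modulation parameters and then transfer the orbital-stability bound. Concretely, first apply Theorem~\ref{T:main1} to obtain trajectories $\tilde a(t),\tilde c(t)$ and the decomposition $u=\eta(\cdot,\tilde a,\tilde c)+\tilde w$ with $\|\tilde w\|_{L^\infty_{[0,t]}H^1_x}\lesssim(\omega+\epsilon t^{1/2})e^{C\epsilon t}$, where by \eqref{E:thm1-est-par} the pair $(\tilde a,\tilde c)$ obeys \eqref{E:thm-dym} modulo a remainder of size $\beta(t):=(\omega+\epsilon t^{1/2})^2e^{C\epsilon t}$ in $L^1_{[0,t]}\cap L^\infty_{[0,t]}$. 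Writing $w=u-\eta(\cdot,a,c)$ for the exact-ODE parameters $(a,c)$ and using the triangle inequality,
\[
\|w\|_{H^1_x}\le\|\tilde w\|_{H^1_x}+\|\eta(\cdot,\tilde a,\tilde c)-\eta(\cdot,a,c)\|_{H^1_x}\lesssim(\omega+\epsilon t^{1/2})e^{C\epsilon t}+\big(|a-\tilde a|+|c-\tilde c|\big),
\]
the last estimate being valid (via smoothness of $\eta$ in its parameters) so long as $c,\tilde c$ stay in a fixed compact subinterval of $(0,\infty)$. Everything thus reduces to bounding the parameter difference $(b,d):=(a-\tilde a,c-\tilde c)$, and likewise for the weighted $L^2_{[0,t]}H^1_x$ piece.

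Next I would derive the ODE governing $(b,d)$ by subtracting the exact system \eqref{E:thm-dym} from the modulation system. Since the $\epsilon$-terms in \eqref{E:thm-dym} are smooth, bounded functions of $(a,c)$ (this is where $V\in\mathcal{C}^1_b$ and the range $\delta\le c\le\delta^{-1}$ are used), their differences are Lipschitz of size $O(\epsilon(|b|+|d|))$, while the modulation remainder contributes a forcing of size $\beta$. The only $O(1)$ coefficient arises from the free-transport term $c^2-\tilde c^2=(c+\tilde c)d$, so the linearized structure is
\[
\dot b = 2\tilde c\,d+O(\epsilon(|b|+|d|))+O(\beta),\qquad \dot d=\epsilon\,\partial_a\langle V\eta,\eta\rangle\,b+O(\epsilon|d|)+O(\beta).
\]
The essential feature is that this $2\times2$ system has eigenvalues of modulus $O(\sqrt\epsilon)$ (namely $\pm\sqrt{2\tilde c\,\epsilon\,\partial_a\langle V\eta,\eta\rangle}+O(\epsilon)$, real or imaginary according to the sign): a deviation in the scale $c$ perturbs the soliton speed $c^2$ and hence drives the position $a$, which feeds back into the mass/scale balance at order $\epsilon$. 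A Gronwall estimate for the system then yields $|b(t)|+|d(t)|\lesssim\big(|b(0)|+|d(0)|+\int_0^t\beta\,ds\big)e^{C\epsilon^{1/2}t}$.

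To close, I would note that on the horizon $t\le\sigma\epsilon^{-1/2}\log\epsilon^{-1}$ one has $\epsilon^{1/2}t\le\sigma\log\epsilon^{-1}$, so $e^{C\epsilon^{1/2}t}\le\epsilon^{-C\sigma}$ is a negative power of $\epsilon$ that is made as mild as desired by fixing $\sigma=\sigma(c_0,\|V\|_{\mathcal{C}^1_b})$ small; moreover $\int_0^t\beta\,ds\lesssim\omega^2t+\epsilon^2t^2=o(1)$ on this horizon. Feeding the resulting bound $|b|+|d|\lesssim(\omega+\epsilon t^{1/2})e^{C\epsilon^{1/2}t}$ back into the triangle inequality, and using the trivial $e^{C\epsilon t}\le e^{C\epsilon^{1/2}t}$ for the $\tilde w$ term, produces \eqref{E:thm-est-w}; the weighted norm is handled identically since the soliton difference carries the same exponential weight.

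I expect the parameter step to be the main obstacle. Unlike the energy estimate for $w$, the position–scale coupling is genuinely destabilizing at rate $\epsilon^{1/2}$, so the argument cannot be run to the $O(\epsilon^{-1})$ horizon of Theorem~\ref{T:main1}; it is precisely this coupling that both shortens the valid time to $O(\epsilon^{-1/2}\log\epsilon^{-1})$ and accounts for the $e^{C\epsilon^{1/2}t}$ loss in the exponent. The delicate points are (i) tracking the initial parameter mismatch $|a_0-\tilde a(0)|+|c_0-\tilde c(0)|$ sharply, so that its amplification along the unstable mode of the $2\times2$ system remains within the stated bound, and (ii) verifying that the $O(\beta)$ modulation error does not resonate with that growing mode before time $\sigma\epsilon^{-1/2}\log\epsilon^{-1}$. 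Controlling this coupling — rather than any individual energy or virial estimate, which are inherited from Theorem~\ref{T:main1} — is the crux of the proof.
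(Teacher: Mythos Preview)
Your proposal is correct and follows essentially the same route as the paper: both reduce Theorem~\ref{T:main} to an ODE comparison between the modulation trajectories furnished by Theorem~\ref{T:main1} and the exact solution of \eqref{E:thm-dym}, and then transfer the orbital-stability bound to $w$ via the triangle inequality $\|w\|\le\|\tilde w\|+\|\eta(\cdot,\tilde a,\tilde c)-\eta(\cdot,a,c)\|$. The paper implements your eigenvalue observation concretely through the anisotropic norm $p=(\epsilon\bar a^2+\bar c^2)^{1/2}$, which converts the $2\times2$ system (with coefficient matrix bounded entrywise by $\bigl(\begin{smallmatrix}\epsilon&1\\\epsilon&\epsilon\end{smallmatrix}\bigr)$) into the scalar inequality $|\dot p|\lesssim\epsilon^{1/2}p+\epsilon^{1/2}|\mathcal R_1|+|\mathcal R_2|$; note that a naive Gronwall on $|b|+|d|$ would only yield rate $O(1)$ from the $2\tilde c\,d$ term, so this weighting (or an equivalent diagonalization) is precisely what is needed to make your Gronwall step produce the rate $\epsilon^{1/2}$.
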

We remark that if one selects initial data so that  $\omega \lesssim
\epsilon^{3/4}$, then the two terms on the right-side of the
estimate \eqref{E:thm-est-w} balance on the $\epsilon^{-1/2}$ time
scale.  In this case the bound becomes $\epsilon^{3/4}
e^{C\epsilon^{1/2}t}$.

\begin{figure}
 \includegraphics[scale=.65]{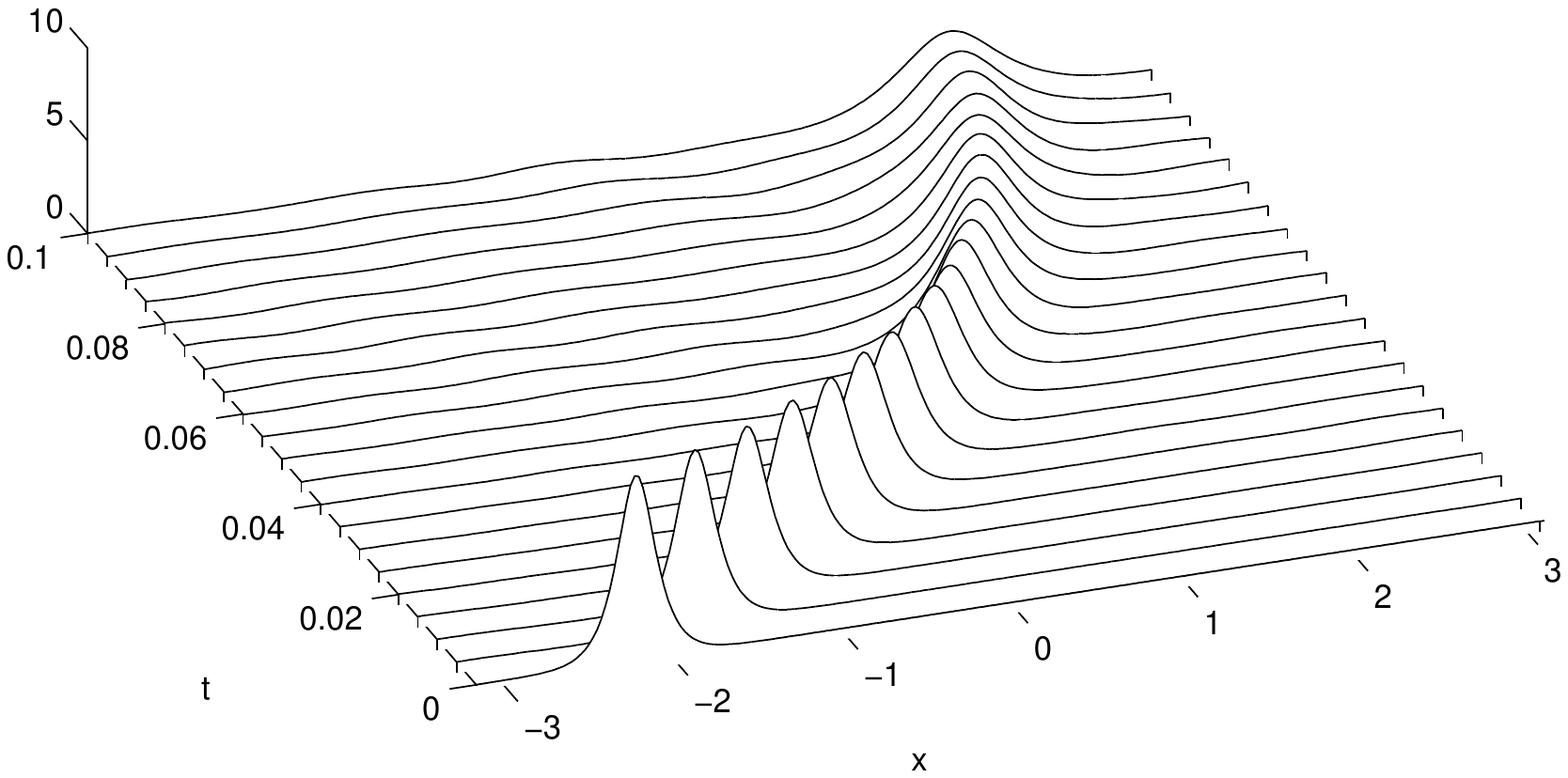}
 \includegraphics[scale=.8]{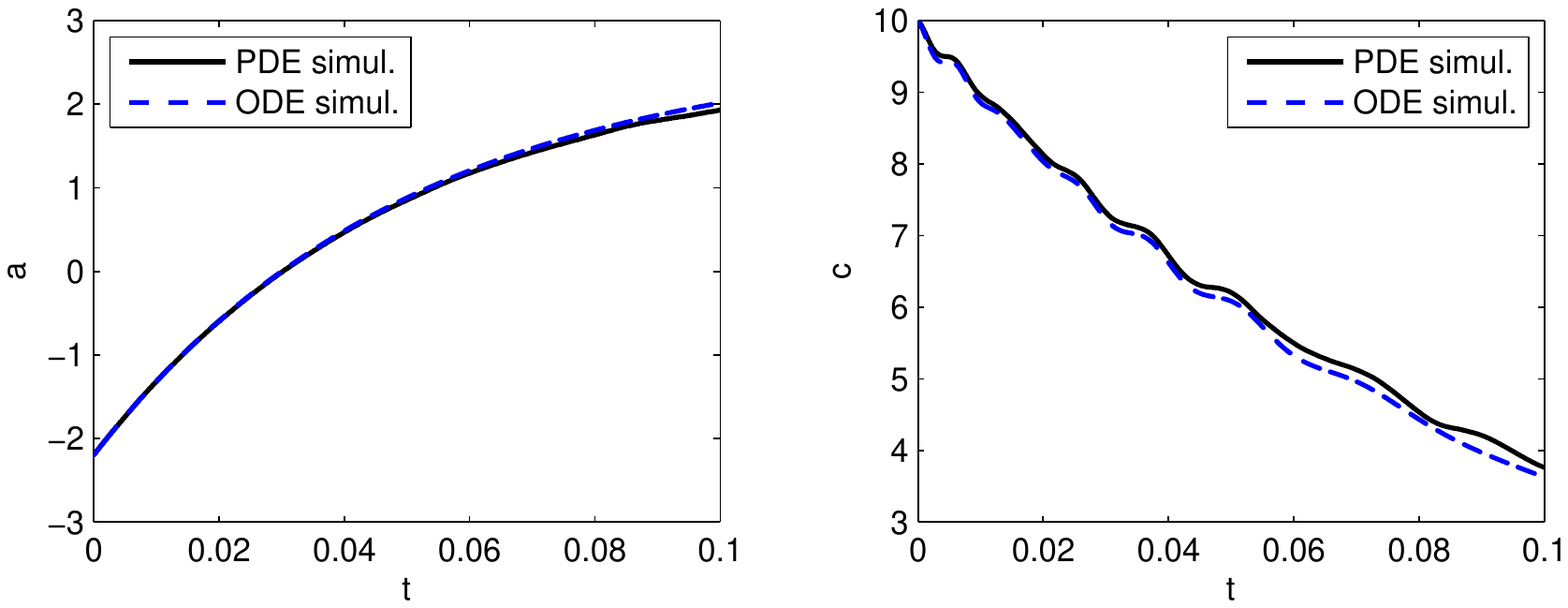}
 \caption{With external potential given by $V_1$ as in \eqref{E:potential1}, the top plot gives the
 rescaled evolution $U(X,T)$, the bottom two plots give the comparison between the evolution of the
 parameters obtained solving the ODE system and exact PDE evolution, i.e. we fit the solution to
 $\eta(X,\tilde A,\tilde C)$, and plot $T$ versus $\tilde A$ and $\tilde C$ respectively.}
 \label{F:fig1}
\end{figure}

\subsection{Relation to recent work}

The energy-Lyapunov based methods for proving orbital stability of
solitons subject to perturbations (of the data, as opposed to the
structural perturbations considered here) were developed by Benjamin
\cite{Be}, Bona \cite{Bo}, Weinstein \cite{W2},
Grillakis-Shatah-Strauss \cite{GSS1, GSS2}.  In the last decade
several results have emerged using the same basic framework to
address the dynamics of solitons for equations subject to structural
perturbations \cite{BJ, FGJS, FTY, H, HL, HPZ, HZ1, HZ2, DV, AS-S,
AS, M1, M2}. The nonlinear Schr\"odinger equation (NLS) with slowly
varying potential was considered by
Fr\"ohlich-Gustafson-Jonsson-Sigal \cite{FGJS} and a result of
``orbital stability'' type was obtained, however the estimates were
not strong enough to obtain ``exact predictive dynamics''.
Holmer-Zworski \cite{HZ2} obtained exact predictive dynamics plus
refined accuracy by adopting the conceptual perspective of
symplectic projection, but also, at the technical level, finding an
appropriate distortion of the soliton manifold that enabled refined
Lyapunov estimates.  This ``symplectic projection plus correction
term method'' has been subsequently pursued in different contexts in
Datchev-Ventura \cite{DV}, Holmer-Lin \cite{HL},
Holmer-Perelman-Zworski \cite{HPZ}, and Pocovnicu \cite{P}.  To
treat a problem in which the perturbation gives rise to significant
dispersive radiation, a different approach was employed by Holmer
\cite{H}.  He treated the KdV equation with a slowly varying
potential, and used the Martel-Merle local virial estimate
\cite{M1,M2} to supplement the energy Lyapunov estimate.  In this
paper, we follow this approach as well.  We show the method is
sufficiently robust to handle small non-Hamiltonian perturbations,
which had not been considered in any of the above papers.  A
stochastic variant of the problem we consider has been addressed by
de Bouard--Debussche \cite{deB-Deb} without the use of the local
virial estimate.  Work in progress by Holmer-Setayeshgar \cite{HS}
will adapt the present paper to the stochastic setting and obtain a
refinement of the results of \cite{deB-Deb}.

\subsection{Numerics}
To solve \eqref{E:pmkdv} numerically we adapt the method in \cite{T}
which is based on the fast fourier transform in $x$, then
fourth-order Runge-Kutta for the resulting ODE in $t$. We use the
rescaled coordinate frame $X=\epsilon^{-1/3}x$, $T=\epsilon^{-1} t$,
and consider the equation on $[-\pi,\pi)$. If $U(X,T)$ solves
$$\partial_TU = \partial_X(-\partial_X^2U-2U^3)+V(X)U\,,$$
with initial data
$$U(0,X) = \eta(X,A_0,C_0)
 = \eta(X,\epsilon^{1/3}a_0,\epsilon^{-1/3}c_0)\,,$$
then $u(x,t)=\epsilon^{1/3}U(\epsilon^{1/3}x,\epsilon t)$ gives a
solution of \eqref{E:pmkdv} on
$[-\pi/\epsilon^{1/3},\pi/\epsilon^{1/3})$ with initial data
$u(0,x)=\eta(x,a_0,c_0)$, and periodic boundary conditions.
Fig.~\ref{F:fig1} and Fig~\ref{F:fig2} plot the evolution of the
soliton initial data (after rescaling) in the following external
potential respectively
\begin{equation}
\label{E:potential1} V_1=-10\cos^2(6X) + 6\sin(10X)\,,
\end{equation}
\begin{equation}
\label{E:potential2} V_2=8\cos^2(4X) - 4\sin(2X)\,.
\end{equation}
Note that to examine the solution $u(x,t)$ on time interval $0\leq
t\leq C\epsilon^{-1/2}$(or $C\epsilon^{-1}$), we should let $U(X,T)$
evolve for time $C\epsilon^{1/2}$(or $C$).

\begin{figure}
 \includegraphics[scale=.65]{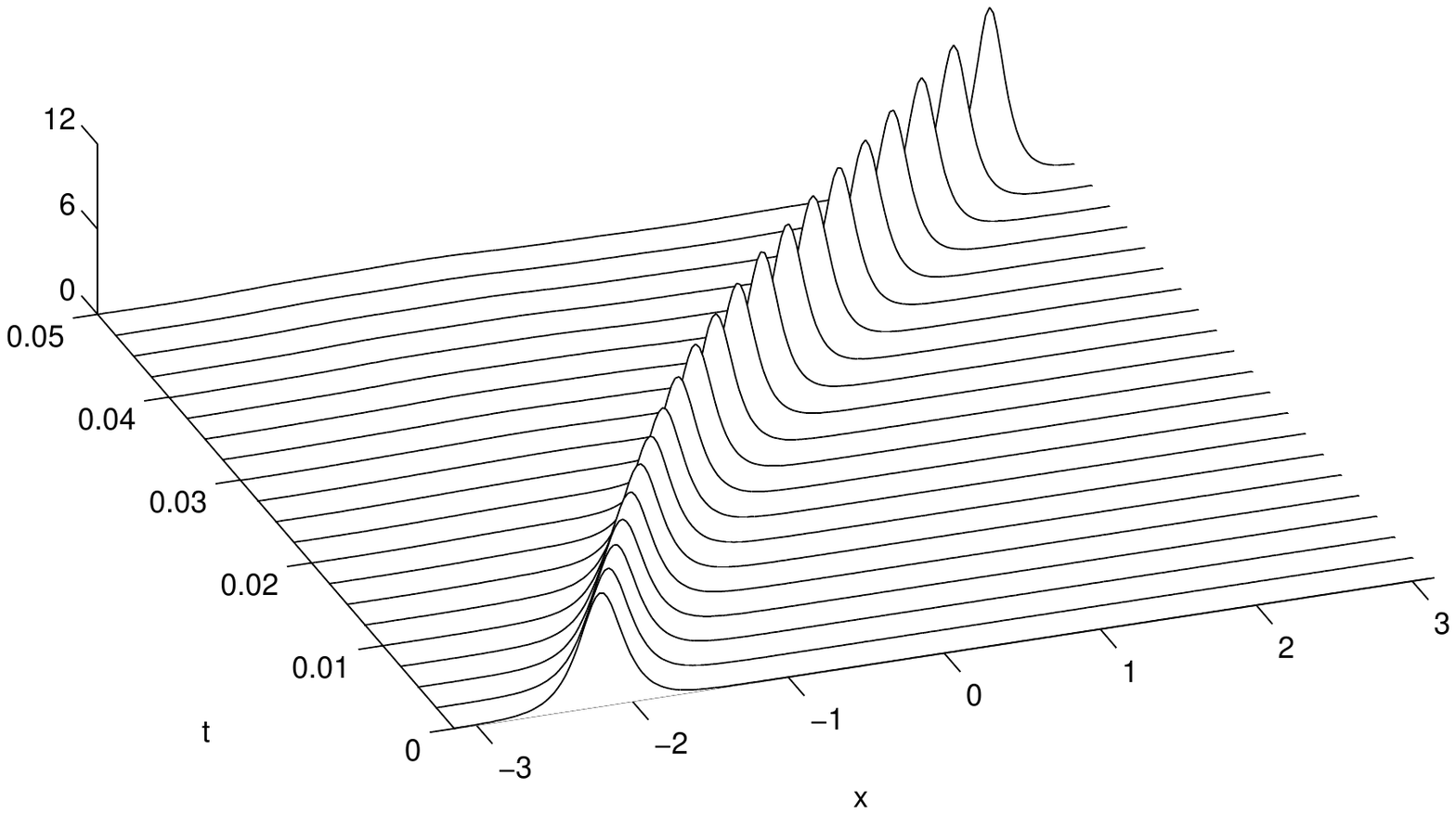}
 \includegraphics[scale=.8]{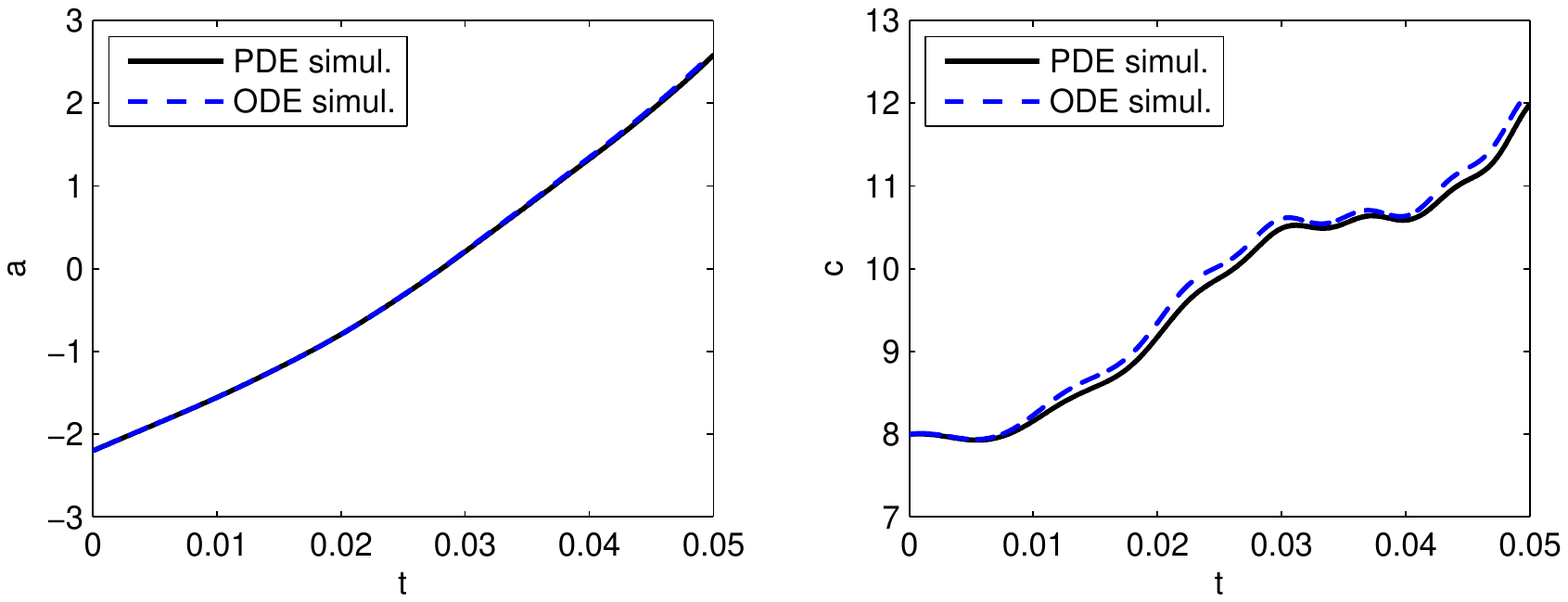}
 \caption{These plots are analogs of Fig.~\ref{F:fig1}, the external potential is given by
 $V_2$ as in \eqref{E:potential2}.}
 \label{F:fig2}
\end{figure}

\subsection{Acknowledgements}
The author thanks his advisor Prof. Justin Holmer for all the
helpful discussions and encouragements.

\section{Background on Hamiltonian structure}
\label{S:background}

Let $J=\partial_x$, and consider
$L^2(\mathbb{R}\mapsto \mathbb{R})$ as a manifold with metric $\la
v_1,v_2\ra = \int v_1  v_2\,dx$, we can define the symplectic form
as
\begin{equation}
\label{E:symp} \omega(v_1,v_2) = \la v_1 J^{-1} v_2 \ra = \la v_1,
\partial_x^{-1}v_2\ra \,,
\end{equation}
where $J^{-1}$ is given by
$$J^{-1}f(x)=\partial_x^{-1}f(x)\defeq\frac12\left(\int_{-\infty}^x
- \int_x^{+\infty}\right)f(y)\,dy\,.$$ The mKdV equation
\eqref{E:mkdv} is the Hamiltonian flow associated with
$$H_0(u) = \frac12\int (u_x^2-u^4)\,,$$
i.e. we can write \eqref{E:mkdv} as
\begin{equation}
\label{E:mkdv2}\partial_t u = JH_0'(u)\,.
\end{equation}
Solutions to mKdV also satisfy conservation of mass $M(u)$ and
momentum $P(u)$, where
$$M(u) = \int u\, dx \,, \qquad P(u) = \frac12\int u^2\,dx \,.$$
We define 2-dimensional manifold of solitons $M$ as
$$M = \{ \, \eta(\cdot, a, c) \, | \,\,a\in \mathbb{R}, c\in \mathbb{R}\,\backslash\, \{0\}\} \,.$$
The symplectic form \eqref{E:symp} restricted to $M$ is given
by $\omega|_M=da\wedge dc$. We denote $\eta=\eta(\cdot,a,c)$, the
dependence of $(a,c)$ on $\eta$ is always meant implicitly. The
tangent space at $\eta$ is given by
$$T_\eta M = \spn \{ \, \partial_a \eta, \partial_c \eta \, \}\,.$$
Note that $JH_0'(\eta) \in T_{\eta} M$, thus the flow associated to
\eqref{E:mkdv} will remain on $M$ if it is initially. Specifically,
direct computation shows
\begin{equation}
\label{E:JH0-prime} JH_0'(\eta) = c^2\partial_a\eta \,.
\end{equation}
which, together with \eqref{E:mkdv2}, explains the form of the
expression for single solitons.  This is equivalent to saying that
the flow \eqref{E:mkdv2} restricted to $M$ (and thus stays on $M$)
is given by
\begin{equation}
\label{E:free-flow} \left\{
\begin{aligned}
&\dot a = c^2 \\
&\dot c = 0
\end{aligned}
\right.
\end{equation}
One can also get \eqref{E:free-flow} by  first restricting $H_0$ to
$M$ to obtain
$$H_0(\eta) = -\frac13 c^3 \,,$$
and then noticing that \eqref{E:free-flow} is just the solution to
the Hamilton equations of motion for $H_0(\eta)$ with respect to
$\omega|_M$:
\begin{equation}
\label{E:free-ODEs} \left\{
\begin{aligned}
&\dot a = -\frac{\partial H_0}{\partial c} = c^2 \\
&\dot c = \frac{\partial H_0}{\partial a} = 0
\end{aligned}
\right.
\end{equation}
Note that we can write \eqref{E:JH0-prime} as
\begin{equation}
\label{E:JH0-prime-2} JH_0'(\eta) + c^2 JP'(\eta)=0 \,.
\end{equation}
From this, we learned that $L'(\eta) =0$, where
\begin{equation}
\label{E:L-classical} L(u) \defeq H_0(u) + c^2 P(u) \,.
\end{equation}
which is the Lyapunov functional used in the classical orbital
stability theory, see \cite{W2}.

Next, we define the symplectic orthogonal projection operator at
$(a,c)$:
$$\Pi_{a,c}: L^2 \cong T_{\eta}L^2 \to T_\eta M\,,$$
by requiring that
$$\la \Pi_{a,c}^\perp f, J^{-1}\partial_a \eta\ra = \la
\Pi_{a,c}^\perp f, J^{-1}\partial_c \eta\ra =0\,,$$ where
$\Pi_{a,c}^\perp  = I - \Pi_{a,c}$, equivalently,
$$\Pi_{a,c}f = \la
 f, J^{-1}\partial_c \eta\ra\partial_a\eta - \la f, J^{-1}\partial_a \eta\ra\partial_c\eta\,.$$
Note that for mKdV,
$$J^{-1}\partial_a\eta = -\eta\text{   and   }J^{-1}\partial_c\eta =
c^{-1} (x-a)\eta\,.$$

\section{Decomposition of the flow}
\label{S:eff-dyn} We can arrange the modulation parameters $a(t)$
and $c(t)$ so that
$$\Pi_{a(t),c(t)}\left[u(x,t) - \eta(x,a(t),c(t))\right]=0\,.$$
This is a standard fact and we recall it in the following
\begin{lemma}
\label{L:orth-decomp} Given $\tilde a$, $\tilde c$, there exist
$\delta_1>0$, $C>0$, such that if $u = \eta(\cdot, \tilde a,\tilde
c)+\tilde w$ with $\|\tilde w\|_{H^1_x}\leq\delta_1$, then there
exist unique $a$, $c$ such that
\begin{equation}
\label{E:decomp}w(x,t) \defeq u(x,t) - \eta(x,a(t),c(t))
\end{equation}
satisfies the symplectic orthogonality conditions
\begin{equation}
\label{E:orth} \la w,J^{-1}\partial_a\eta\ra = \la w,
J^{-1}\partial_c\eta\ra = 0\,.
\end{equation}
Moreover, $$|a-\tilde a|\leq C\|\tilde w\|_{H^1_x}\,,\ \  |c-\tilde
c|\leq C\|\tilde w\|_{H^1_x}\,.$$
\end{lemma}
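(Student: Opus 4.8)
The plan is to recast the two symplectic orthogonality conditions \eqref{E:orth} as the vanishing of a finite-dimensional map and solve by the implicit function theorem. Define $G=(G_1,G_2):\mathbb{R}^2\times H^1_x\to\mathbb{R}^2$ by
$$G_1(a,c,\tilde w)=\la u-\eta(\cdot,a,c),\,J^{-1}\partial_a\eta(\cdot,a,c)\ra,\qquad G_2(a,c,\tilde w)=\la u-\eta(\cdot,a,c),\,J^{-1}\partial_c\eta(\cdot,a,c)\ra,$$
where $u=\eta(\cdot,\tilde a,\tilde c)+\tilde w$ is viewed as a function of $\tilde w$. Producing $(a,c)$ as in the statement is exactly solving $G(a,c,\tilde w)=0$. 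At $\tilde w=0$ the choice $(a,c)=(\tilde a,\tilde c)$ gives $u-\eta=0$, hence $G(\tilde a,\tilde c,0)=0$; this is the base point. The inner products make sense because, as recorded in Section~\ref{S:background}, $J^{-1}\partial_a\eta=-\eta$ and $J^{-1}\partial_c\eta=c^{-1}(x-a)\eta$ both lie in $L^2$, a feature special to the cubic case. Since $\eta=cQ(c(x-a))$ with $Q=\sech$ is smooth in $(a,c)$ with exponentially decaying derivatives, $G$ is $C^1$ in $(a,c)$.

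Next I would compute the Jacobian $D_{(a,c)}G$ at the base point. Because $u-\eta$ vanishes there, every term in which the inner product lands on a second derivative of $\eta$ drops out, leaving
$$D_{(a,c)}G\big|_{\mathrm{base}}=-\begin{pmatrix}\la\partial_a\eta,J^{-1}\partial_a\eta\ra & \la\partial_c\eta,J^{-1}\partial_a\eta\ra\\[2pt]\la\partial_a\eta,J^{-1}\partial_c\eta\ra & \la\partial_c\eta,J^{-1}\partial_c\eta\ra\end{pmatrix}.$$
Since $\partial_x^{-1}$ is skew-adjoint, $\omega(v_1,v_2)=\la v_1,J^{-1}v_2\ra$ is antisymmetric, so the diagonal entries vanish; the off-diagonal entries are $\pm\,\omega(\partial_a\eta,\partial_c\eta)=\pm1$ because $\omega|_M=da\wedge dc$. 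Hence $D_{(a,c)}G|_{\mathrm{base}}=\left(\begin{smallmatrix}0&1\\-1&0\end{smallmatrix}\right)$, the standard symplectic matrix, which is invertible with determinant $1$.

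With an invertible Jacobian in hand, the Banach-space implicit function theorem (the parameter $\tilde w$ ranges over $H^1_x$) yields some $\delta_1>0$ and a unique $C^1$ map $\tilde w\mapsto(a(\tilde w),c(\tilde w))$ on $\{\|\tilde w\|_{H^1_x}\le\delta_1\}$ solving $G=0$ with $(a(0),c(0))=(\tilde a,\tilde c)$; this gives existence and uniqueness. For the quantitative bounds I would differentiate $G(a(\tilde w),c(\tilde w),\tilde w)=0$ to obtain $D_{\tilde w}(a,c)=-(D_{(a,c)}G)^{-1}D_{\tilde w}G$, and observe that $D_{\tilde w}G$ sends $h\in H^1_x$ to $(\la h,J^{-1}\partial_a\eta\ra,\la h,J^{-1}\partial_c\eta\ra)$, whose operator norm is controlled by $\|J^{-1}\partial_a\eta\|_{L^2}+\|J^{-1}\partial_c\eta\|_{L^2}\lesssim1$. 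As $(D_{(a,c)}G)^{-1}$ stays bounded near the base point (shrinking $\delta_1$ if needed), the map $\tilde w\mapsto(a,c)$ is Lipschitz, giving $|a-\tilde a|+|c-\tilde c|\le C\|\tilde w\|_{H^1_x}$ with $C$ depending on $\tilde a,\tilde c$.

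The computation is routine; the only genuine subtlety—and the reason the statement is clean—is that the base-point Jacobian is exactly the nondegenerate symplectic form, so invertibility is automatic and needs no coercivity input. The one place that requires care is confirming $J^{-1}\partial_c\eta\in L^2$: were this to fail, as it does for $p\neq3$, the inner products defining $G_2$ would be ill-posed and the whole scheme would have to be recast (for instance by projecting onto a skew space rather than $T_\eta M$).
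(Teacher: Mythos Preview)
Your proof is correct and follows essentially the same route as the paper: define a two-component map encoding the orthogonality conditions, observe that its Jacobian in $(a,c)$ at the base point is precisely the nondegenerate restricted symplectic form $\omega|_M=da\wedge dc$, and invoke the implicit function theorem. You include a bit more detail on the Lipschitz estimate and the role of $J^{-1}\partial_c\eta\in L^2$ than the paper does, but the argument is the same.
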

\begin{proof}
Define
$\phi:\,H^1_x\times\mathbb{R}\times\mathbb{R}\rightarrow\mathbb{R}^2$
by
$$\phi(v,a,c) = \begin{bmatrix}
\la v-\eta,\eta\ra \\
\la v-\eta,(x-a)\eta\ra
\end{bmatrix}
$$
Using $\omega|_M=da\wedge dc$, we can get the Jacobian matrix of
$\phi$ with respect to $(a,c)$ at $(\eta(\cdot,\tilde a,\tilde
c),\tilde a,\tilde c)$
$$(D_{a,c}\phi)(\eta(\cdot,\tilde a,\tilde
c),\tilde a,\tilde c) =
\begin{bmatrix}
0 & 1 \\
1 & 0
\end{bmatrix}
$$
which implies, by the implicit function theorem, that the equation
$\phi(u,a,c)=0$ can be solved for $(a,c)$ in terms of $u$ in a
neighborhood of $\eta(\cdot, \tilde a,\tilde c)$.
\end{proof}
Now since $u=w+\eta$ and $u$ solves \eqref{E:pmkdv}, we compute
\begin{equation}
\begin{aligned}
\label{E:lin-w}
\partial_t w& = \partial_x(-\partial_x^2w-6\eta^2w-6\eta w^2-2w^3)+\epsilon Vw
-F_0\\&=\partial_x(\mathcal{L}w-c^2 w - 6\eta w^2-2w^3)+\epsilon Vw
-F_0\,,
\end{aligned}
\end{equation}
where
$$\mathcal{L} = -\partial_x^2-6\eta^2+c^2\,,$$
and $F_0$ results from the perturbation and $\partial_t$ landing on
the parameters:
$$F_0 = (\dot a-c^2)\partial_a\eta + \dot c\partial_c \eta -
\epsilon V \eta\,.$$
Next, decompose $F_0$ into the symplectically parallel part
$\Pi_{a,c} F_0$ and symplectically orthogonal part $\Pi_{a,c}^\perp
F_0$, explicitly,
\begin{equation}
\label{E:F0-paralel} \Pi_{a,c} F_0 = (\dot a - c^2 - \epsilon \la
V\eta, J^{-1}\partial_c\eta\ra)\partial_a\eta + (\dot c + \epsilon
\la V\eta,J^{-1}\partial_a\eta\ra)\partial_c\eta\,,
\end{equation}
\begin{equation}
\label{E:F0-perp} \Pi_{a,c}^\perp F_0 = -\epsilon V\eta + \epsilon
\la V\eta, J^{-1}\partial_c\eta\ra\partial_a\eta - \epsilon \la
V\eta,J^{-1}\partial_a\eta\ra\partial_c\eta\,.
\end{equation}
We now obtain the equations for the parameters:
\begin{lemma}[effective dynamics]
\label{L:ODEs} Given $V\in\mathcal{C}^1_b$, suppose that $w$ defined
by \eqref{E:decomp} satisfies the orthogonality conditions
\eqref{E:orth}.  Then there exists $\alpha>0$ such that
\begin{equation}
\label{E:eff-dyn} \| \partial_t \eta - c^2\partial_a\eta -
\epsilon\Pi_{a,c}(V\eta)  \|_{T_{a,c}M} \lesssim
\|e^{-\alpha|x-a|}w\|_{H^1}^2 +
\epsilon\|e^{-\alpha|x-a|}w\|_{H^1}\,.
\end{equation}
Explicitly,
\begin{equation}
\label{E:eff-dyn2} \begin{aligned}&\left|\dot a - c^2 - \epsilon\la
V\eta, J^{-1}\partial_c\eta\ra\right| \lesssim
\|e^{-\alpha|x-a|}w\|_{H^1}^2
+ \epsilon\|e^{-\alpha|x-a|}w\|_{H^1}\,,\\
&\left|\dot c + \epsilon\la V\eta, J^{-1}\partial_a\eta\ra\right|
\lesssim \|e^{-\alpha|x-a|}w\|_{H^1}^2 +
\epsilon\|e^{-\alpha|x-a|}w\|_{H^1}\,.
\end{aligned}
\end{equation}
\end{lemma}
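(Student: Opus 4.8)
The plan is to derive the two modulation rates by differentiating the symplectic orthogonality conditions \eqref{E:orth} in $t$ and inserting the evolution equation \eqref{E:lin-w} for $\partial_t w$. The key observation is that the two quantities to be estimated are exactly the components of $\Pi_{a,c}F_0$ in the basis $\{\partial_a\eta,\partial_c\eta\}$: indeed $\partial_t\eta-c^2\partial_a\eta-\epsilon\Pi_{a,c}(V\eta)=\Pi_{a,c}F_0$ by \eqref{E:F0-paralel}, so it suffices to bound these two components. Writing $\mu_a\defeq\dot a-c^2-\epsilon\la V\eta,J^{-1}\partial_c\eta\ra$ and $\mu_c\defeq\dot c+\epsilon\la V\eta,J^{-1}\partial_a\eta\ra$, I will extract them by symplectically pairing $F_0=\partial_x(\mathcal{L}w-c^2w-6\eta w^2-2w^3)+\epsilon Vw-\partial_t w$ against $J^{-1}\partial_a\eta$ and $J^{-1}\partial_c\eta$, since $\la F_0,J^{-1}\partial_a\eta\ra=-\mu_c$ and $\la F_0,J^{-1}\partial_c\eta\ra=\mu_a$ by $\omega|_M=da\wedge dc$.

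First I would record the ingredients. Differentiating the profile equation $-\partial_x^2\eta+c^2\eta-2\eta^3=0$ in $a$ and $c$ gives the linearized identities $\mathcal{L}\partial_a\eta=0$ and $\mathcal{L}\partial_c\eta=-2c\eta$. I will also use the integration-by-parts rule $\la\partial_x f,J^{-1}g\ra=-\la f,g\ra$, the formulas $J^{-1}\partial_a\eta=-\eta$ and $J^{-1}\partial_c\eta=c^{-1}(x-a)\eta$, the pointwise identity $c^2\partial_c\eta+c(x-a)\partial_a\eta=c\eta$, and the orthogonality conditions in the equivalent form $\la w,\eta\ra=\la w,(x-a)\eta\ra=0$.

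Then I would differentiate $\la w,J^{-1}\partial_a\eta\ra=0$, using $\la\partial_t w,J^{-1}\partial_a\eta\ra=-\la w,\partial_t(J^{-1}\partial_a\eta)\ra$ and expanding $\partial_t(J^{-1}\partial_a\eta)$ through $\dot a\,\partial_a+\dot c\,\partial_c$. Substituting \eqref{E:lin-w}, moving $\partial_x$ onto $J^{-1}\partial_a\eta$, and applying $\mathcal{L}\partial_a\eta=0$ produces a linear term $c^2\la w,\partial_a\eta\ra$ from the operator and a term $-\dot a\la w,\partial_a\eta\ra$ from $\partial_t w$; since $\dot a=c^2+\epsilon\la V\eta,J^{-1}\partial_c\eta\ra+\mu_a$, the $O(1)$ pieces cancel, leaving only the quadratic term $\la 6\eta w^2+2w^3,\partial_a\eta\ra$, terms of size $\epsilon\la Vw,\cdot\ra$, and terms proportional to $\mu_a,\mu_c$ themselves times inner products of $w$ with localized profiles. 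The analogous pairing against $J^{-1}\partial_c\eta$ uses $\mathcal{L}\partial_c\eta=-2c\eta$ together with $\la w,\eta\ra=0$, and here the decisive cancellation is $c^2\la w,\partial_c\eta\ra+c\la w,(x-a)\partial_a\eta\ra=c\la w,\eta\ra=0$, the first term coming from the operator and the second from the factor $\dot a\,c^{-1}\approx c$ in the $\partial_t w$ pairing, via the pointwise identity and orthogonality.

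The outcome is a $2\times2$ linear system $(M_0+E)X=R$ for $X=(\mu_a,\mu_c)^{T}$, where $M_0=\left[\begin{smallmatrix}0&-1\\1&0\end{smallmatrix}\right]$, the matrix $E$ has entries each equal to an inner product of $w$ against an exponentially localized profile, hence of size $\|e^{-\alpha|x-a|}w\|_{L^2}$, and $R$ collects the quadratic and $\epsilon$-linear remainders. I would estimate $R$ from the exponential decay of $\eta,\partial_a\eta,\partial_c\eta$ (and of $(x-a)$ times these, after shrinking $\alpha$, with $\alpha=\alpha(\delta)$): bounding $|\la 6\eta w^2,\cdot\ra|\lesssim\|e^{-\alpha|x-a|}w\|_{L^2}^2$, the cubic term by $\|w\|_{L^\infty}\|e^{-\alpha|x-a|}w\|_{L^2}^2\lesssim\|e^{-\alpha|x-a|}w\|_{H^1}^2$ via Sobolev embedding, and the potential terms by $\epsilon\|V\|_{L^\infty}\|e^{-\alpha|x-a|}w\|_{L^2}$. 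Since $\det M_0=1$, for $\|w\|_{H^1}$ small enough $M_0+E$ is invertible with inverse bounded in terms of $\delta$, and inverting gives $|\mu_a|+|\mu_c|\lesssim|R|\lesssim\|e^{-\alpha|x-a|}w\|_{H^1}^2+\epsilon\|e^{-\alpha|x-a|}w\|_{H^1}$, which is \eqref{E:eff-dyn2}; the norm form \eqref{E:eff-dyn} then follows since $\|\partial_a\eta\|,\|\partial_c\eta\|$ are controlled by $\delta$. The main obstacle is verifying the two $O(1)$ cancellations, which are precisely what demote the potentially dangerous $O(\|w\|)$ linear terms to admissible remainders; they rest on the exact matching of the linearized-operator identities with the orthogonality conditions and with $\dot a\approx c^2$.
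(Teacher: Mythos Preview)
Your proposal is correct and follows essentially the same approach as the paper: differentiate the two orthogonality conditions \eqref{E:orth} in $t$, substitute \eqref{E:lin-w}, and extract a $2\times 2$ linear system for $(\mu_a,\mu_c)$ whose principal part is invertible and whose right-hand side is bounded by $\|e^{-\alpha|x-a|}w\|_{H^1}^2+\epsilon\|e^{-\alpha|x-a|}w\|_{H^1}$. The only cosmetic difference is that you verify the linear-in-$w$ cancellations by hand via $\mathcal{L}\partial_a\eta=0$, $\mathcal{L}\partial_c\eta=-2c\eta$, and the scaling identity $c^2\partial_c\eta+c(x-a)\partial_a\eta=c\eta$, whereas the paper packages them structurally through the identity $\la JH_0''(\eta)w,J^{-1}\partial_\bullet\eta\ra+\la w,J^{-1}\partial_\bullet\partial_t\eta\ra=\la w,J^{-1}\partial_\bullet(\partial_t\eta-JH_0'(\eta))\ra$ using self-adjointness of $H_0''$.
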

As all norms on a finite dimensional space are equivalent, we can take
$$\| \alpha \partial_a \eta + \beta \partial_c \eta \|_{T_{a,c}M} = |\alpha| + |\beta|$$
\begin{proof}
Recall that $$\partial_t w = JH_0''(\eta)w - J(6\eta w^2 - 2w^3)
+\epsilon V w - F_0\,.$$ Write $\mathcal{R}$ for the error terms of
the same order as the right hand side of \eqref{E:eff-dyn}, take
derivative with respect to $t$ for $\la w, J^{-1}\partial_a\eta\ra$,
we have
\begin{equation}
\label{E:der-a}
\begin{aligned}
\  &0=\la\partial_tw,J^{-1}\partial_a\eta\ra + \la
w,J^{-1}\partial_a\partial_t\eta\ra\\ =& -\la
F_0,J^{-1}\partial_a\eta\ra + \la
JH_0''(\eta)w,J^{-1}\partial_a\eta\ra + \la
w,J^{-1}\partial_a\partial_t\eta\ra + \mathcal{R} \\=& -\la
F_0,J^{-1}\partial_a\eta\ra + \la w,
J^{-1}\partial_a(\partial_t\eta-JH_0'(\eta))\ra+\mathcal{R} \\=
&-\la F_0,J^{-1}\partial_a\eta\ra + \la w,
J^{-1}\partial_a(\Pi_{a,c}F_0)\ra + \mathcal{R}\,,
\end{aligned}
\end{equation}
where for the penultimate equality we have used $J^*J^{-1}=-I$ and
the self-adjointness of $H_0''$, and for the last that
$$\partial_t\eta-JH_0'(\eta) = (\dot a - c^2)\partial_a\eta + \dot c\,\partial_c\eta = \Pi_{a,c}F_0
+ O(\epsilon)\partial_a\eta + O(\epsilon)\partial_c\eta\,.$$

Taking derivative for $\la w, J^{-1}\partial_c\eta\ra$, similar
computation gives
$$0=-\la F_0,J^{-1}\partial_c\eta\ra + \la w,
J^{-1}\partial_c(\Pi_{a,c}F_0)\ra + \mathcal{R}\,.$$ Combining with
\eqref{E:der-a}, and applying the orthogonality conditions for the
second terms when $\partial_a$ and $\partial_c$ land on the
coefficients of $\Pi_{a,c}F_0$, the lemma follows from
Cauchy-Schwarz and the smallness of $w$.
\end{proof}

\section{Local virial estimate}
\label{S:virial}

In this section we review, and then apply, part of the local virial
estimates due to Martel-Merle. Let $\Phi\in\mathcal{C}(\mathbb{R})$,
$\Phi(x)=\Phi(-x)$, $\Phi^\prime\leq0$ on $(0,\infty)$, such that
$$\Phi(x)=1 \text{ on }[0,1]\,,\qquad\Phi(x)=e^{-x} \text{ on }[2,\infty)\,,\qquad e^{-x}
\leq\Phi(x)\leq 3e^{-x}\text{ on }[0,\infty)\,.$$ Let
$\Psi(x)=\int_0^x\Phi(y)\,dy$, and for $A\gg0$, set
$\Psi_A(x)=A\Psi(x/A)$, we have following
\begin{lemma} [Martel-Merle \cite{MM1,MM2} local virial spectral estimate]
\label{L:virial}There exists $A$ sufficiently large and $\lambda_0$
sufficiently small, such that if $w$ satisfies the orthogonal
condition \eqref{E:orth}, then
\begin{equation*}
-\la\Psi_A(x-a)w,\partial_x\mathcal{L}w\ra\geq
\lambda_0\int(w_x^2+w^2)e^{-|x-a|/A}\,dx\,.
\end{equation*}
\end{lemma}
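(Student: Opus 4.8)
The plan is to convert the skew pairing $-\la\Psi_A(x-a)w,\partial_x\mathcal{L}w\ra$ into a manifestly quadratic form by integration by parts, and then to establish coercivity using the explicit spectrum of the linearized operator together with the orthogonality conditions \eqref{E:orth}. First I would normalize: by translation invariance take $a=0$, and by the scaling $\eta(x,0,c)=cQ(cx)$ introduce $y=cx$, under which $\mathcal{L}=c^2L$ with $L=-\partial_y^2+1-6Q^2$ a fixed P\"oschl--Teller operator. Recalling $J^{-1}\partial_a\eta=-\eta$ and $J^{-1}\partial_c\eta=c^{-1}(x-a)\eta$, the conditions \eqref{E:orth} read $\la w,\eta\ra=0$ and $\la w,(x-a)\eta\ra=0$, i.e. after rescaling the profile is orthogonal to $Q$ and to $yQ$. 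Writing $\Phi_A(x)\defeq\Psi_A'(x)=\Phi(x/A)$, which is comparable to $e^{-|x-a|/A}$, and moving the outer $\partial_x$ and the two derivatives in $\mathcal{L}=-\partial_x^2-6\eta^2+c^2$ onto $w$ and the weight, I expect to arrive at
\begin{equation*}
-\la\Psi_A(x-a)w,\partial_x\mathcal{L}w\ra=\int\Phi_A\Big(\tfrac32 w_x^2+\tfrac{c^2}{2}w^2-3\eta^2w^2\Big)\,dx+6\int\Psi_A\,\eta\eta'\,w^2\,dx-\tfrac12\int\Phi_A''\,w^2\,dx\,.
\end{equation*}
The first integral carries the positive principal part $\tfrac32\Phi_A w_x^2+\tfrac{c^2}{2}\Phi_A w^2$ together with the localized, possibly negative term $-3\Phi_A\eta^2w^2$; the last two integrals are lower order, since $\Phi_A''=O(A^{-2})$ and, as $\eta\eta'$ decays exponentially, $\Psi_A(x-a)$ may be replaced by $(x-a)$ up to a negligible error.

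The heart of the matter is coercivity of this form. I would first treat the unlocalized limit $A\to\infty$, where $\Phi_A\to1$ and $\Psi_A(x-a)\to(x-a)$, so that after rescaling the form becomes, up to the positive factor $c$, the fixed quadratic form
\begin{equation*}
\mathcal{B}(\phi)=\tfrac32\int\phi_y^2+\tfrac12\int\phi^2-3\int Q^2\phi^2+6\int yQQ'\,\phi^2\,.
\end{equation*}
This is the Dirichlet form of a one-dimensional Schr\"odinger operator whose potential is a combination of $\sech^2$ and $y\,\sech\tanh$ terms tending to $\tfrac12$ at infinity; its essential spectrum is bounded below by a positive constant, so it has only finitely many modes of non-positive eigenvalue. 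Following Martel--Merle, I would locate these modes via the explicit eigenfunctions of the underlying P\"oschl--Teller operator $L$ --- the negative eigenmode $Q^2$ and the kernel mode $Q'$ --- and show that the constraints $\phi\perp Q$, $\phi\perp yQ$ exclude them, yielding $\mathcal{B}(\phi)\geq\lambda\|\phi\|_{H^1}^2$.

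Finally I would restore the localization by the substitution $v=\Phi_A^{1/2}w$, which trades the weighted integrals for unweighted ones in $v$ at the cost of commutator terms in which each derivative of $\Phi_A^{1/2}$ contributes a factor $O(A^{-1})$; the $\Phi_A''$ term is likewise $O(A^{-2})$. Since the orthogonality conditions are posed against the exponentially localized $\eta$ and $(x-a)\eta$, where $\Phi_A\approx1$, they are preserved up to $O(A^{-1})$ under this substitution, so the weighted coercivity reduces to the $A=\infty$ statement for $v$ with errors absorbed into a small $\lambda_0$ upon taking $A$ large. Away from the soliton the weight is comparable to $e^{-|x-a|/A}$ while $\eta^2$ and $\eta\eta'$ are exponentially negligible, so there the form is dominated outright by its positive part.

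The step I expect to be the main obstacle is the spectral positivity of $\mathcal{B}$ with the ``wrong'' orthogonality conditions: the symplectic constraints \eqref{E:orth} give orthogonality to $Q$ and $yQ$, not to the negative mode $Q^2$ and the zero mode $Q'$ that govern the failure of positivity. One must verify by an explicit finite-dimensional computation that orthogonality to $Q$ and $yQ$ nonetheless forces $\mathcal{B}>0$ --- concretely, that the $2\times2$ matrix pairing the two constraint directions against the two dangerous modes is nondegenerate with the correct sign. A secondary difficulty is controlling the virial term $6\int\Psi_A\eta\eta'w^2$, whose weight grows; confirming it is genuinely lower order relies on the exponential localization of $\eta\eta'$, and it is precisely here that the special structure of the $p=3$ (mKdV) nonlinearity, noted in the introduction, streamlines the estimate.
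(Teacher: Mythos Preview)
The paper does not give its own proof of this lemma: it is quoted as a black box from Martel--Merle \cite{MM1,MM2} and then applied in Lemma~\ref{L:app-virial}. Your sketch is a faithful outline of the Martel--Merle argument --- the integration-by-parts identity you wrote is correct, the reduction by scaling to the fixed operator $L=-\partial_y^2+1-6Q^2$ is standard, and your identification of the crux (that the symplectic constraints give orthogonality to $Q$ and $yQ$ rather than to the negative mode $Q^2$ and the kernel mode $Q'$, so a finite-dimensional nondegeneracy computation is required) is exactly where the work in \cite{MM1} lies. The localization step via $v=\Phi_A^{1/2}w$ with $O(A^{-1})$ commutator errors is likewise the device used there. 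So there is nothing to compare against in the present paper; your proposal is consistent with the cited proof.
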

Denoting $\psi(\cdot)$ for $\Psi_A(\cdot-a)$, we now proceed as in
\cite{MM1}:
\begin{lemma}[local virial estimate]
\label{L:app-virial} Suppose $V$ is bounded, then there exist
$\alpha>0$ and $\kappa_j>0$, $j=1,2$, such that if $w$ solves
\eqref{E:lin-w} and satisfies the orthogonality conditions
\eqref{E:orth}, then
\begin{equation}
\label{E:app-virial-diff} \|e^{-\alpha|x-a|}w\|^2_{H^1_x}\leq
-\kappa_1\partial_t\int\psi w^2\,dx
+\kappa_2\epsilon^2+\kappa_2\epsilon\|w\|^2_{H^1_x}\,.
\end{equation}
\end{lemma}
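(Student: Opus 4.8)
The plan is to differentiate the local virial quantity $\int \psi w^2\,dx$ in time, substitute the evolution equation \eqref{E:lin-w} for $w$, and show that the dominant term is exactly the spectral quadratic form controlled by Lemma~\ref{L:virial}, with all remaining terms absorbable into the right-hand side of \eqref{E:app-virial-diff}. First I would compute
$$\partial_t \int \psi w^2\,dx = \int (\partial_t \psi) w^2\,dx + 2\int \psi w\,\partial_t w\,dx\,.$$
Since $\psi(x)=\Psi_A(x-a)$ depends on $t$ only through $a(t)$, we have $\partial_t\psi = -\dot a\,\Psi_A'(x-a)$, and using Lemma~\ref{L:ODEs} to write $\dot a = c^2 + O(\epsilon) + O(\|e^{-\alpha|x-a|}w\|^2)$, this term contributes a multiple of $-c^2\int \Psi_A'(x-a)w^2\,dx$ plus controllable errors. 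The main term is the second one: inserting \eqref{E:lin-w}, the leading piece is $2\int \psi w\,\partial_x(\mathcal{L}w - c^2 w)\,dx$, which after integrating by parts and rearranging produces $-\la \Psi_A(x-a)w, \partial_x \mathcal{L}w\ra$ up to lower-order commutator terms involving $\psi'$. Applying Lemma~\ref{L:virial} then bounds this below by $\lambda_0\int(w_x^2+w^2)e^{-|x-a|/A}\,dx$, which after fixing $\alpha$ in terms of $1/A$ is comparable to $\|e^{-\alpha|x-a|}w\|_{H^1_x}^2$.

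Next I would treat the remaining terms one by one. The nonlinear terms $2\int\psi w\,\partial_x(-6\eta w^2 - 2w^3)\,dx$ are cubic and higher in $w$; after integration by parts they are controlled by $\|e^{-\alpha|x-a|}w\|_{H^1_x}^2$ times a small factor (coming from the smallness of $w$ in $H^1$), so they can be absorbed into the left-hand side by taking $\delta_1$ small. The genuinely perturbative contributions are $2\epsilon\int \psi w\,Vw\,dx$ and the term from $-F_0$, namely $-2\int \psi w\,F_0\,dx$. The former is bounded by $\epsilon\|V\|_\infty\|e^{-\alpha|x-a|}w\|_{H^1_x}^2$ since $\psi$ is bounded and this matches the $\kappa_2\epsilon\|w\|^2_{H^1_x}$ term. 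For the $F_0$ contribution, I would use the decomposition $F_0 = \Pi_{a,c}F_0 + \Pi^\perp_{a,c}F_0$ from \eqref{E:F0-paralel}--\eqref{E:F0-perp}: the parallel part is a combination of $\partial_a\eta$ and $\partial_c\eta$ with coefficients that, by Lemma~\ref{L:ODEs}, are $O(\epsilon) + O(\|e^{-\alpha|x-a|}w\|^2)$, and the perpendicular part has size $O(\epsilon)$ coming from $-\epsilon V\eta$. Pairing either against $\psi w$ and using Cauchy-Schwarz together with the exponential localization of $\eta$ and of its derivatives yields a bound of the form $\epsilon\|e^{-\alpha|x-a|}w\|_{H^1_x} \leq \tfrac{\lambda_0}{2}\|e^{-\alpha|x-a|}w\|_{H^1_x}^2 + C\epsilon^2$ by Young's inequality, which produces the $\kappa_2\epsilon^2$ term.

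The hard part will be matching the localization weights cleanly: Lemma~\ref{L:virial} delivers coercivity with the specific weight $e^{-|x-a|/A}$, whereas the statement is phrased with a weight $e^{-\alpha|x-a|}$, so I must fix $\alpha \le 1/(2A)$ (say) and verify that every error term is dominated by this same weighted norm rather than a heavier-tailed one. The commutator terms where $\partial_x$ lands on $\psi$ rather than on $\mathcal{L}w$ require care, since $\psi' = \Phi_A$ is only $O(1)$ and not small; these must be shown to combine with the $\partial_t\psi$ contribution so that the net lower-order virial terms do not spoil the sign of the leading coercive term — this is precisely the structural computation of Martel-Merle, and I would follow \cite{MM1} to confirm that for $A$ large enough the favorable sign is retained. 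Once the signs and weights are reconciled, collecting the leading coercive term on the left and the $\epsilon^2$, $\epsilon\|w\|^2_{H^1_x}$, and absorbed cubic terms on the right yields \eqref{E:app-virial-diff} after relabeling constants $\kappa_1,\kappa_2$.
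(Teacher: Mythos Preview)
Your approach is correct and matches the paper's proof: differentiate $\int\psi w^2$, substitute \eqref{E:lin-w}, isolate the main term $2\int\psi w\,\partial_x(\mathcal{L}w)$ for Lemma~\ref{L:virial}, and estimate the remaining pieces (the $\psi'$ term via $\dot a - c^2$ and Lemma~\ref{L:ODEs}, the nonlinear cubic/quartic terms by smallness of $\|w\|_{H^1}$, the $\epsilon Vw$ term by $\epsilon\|w\|_{H^1}^2$, and $F_0$ via the $\Pi/\Pi^\perp$ splitting with Young's inequality for the $\epsilon^2$ contribution).

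One clarification: the ``commutator terms where $\partial_x$ lands on $\psi$ rather than on $\mathcal{L}w$'' that you flag as the hard part are \emph{not} something you need to handle separately here. The expression $2\int\psi w\,\partial_x(\mathcal{L}w)\,dx = 2\la\psi w,\partial_x\mathcal{L}w\ra$ is already exactly (up to sign and a factor of $2$) the quantity appearing in Lemma~\ref{L:virial}; the internal structural computation with $\psi'$ terms and the choice of large $A$ is the \emph{content} of Lemma~\ref{L:virial} (proved by Martel--Merle), not an additional step in the proof of Lemma~\ref{L:app-virial}. So no integration by parts or commutator analysis is needed on term II --- just apply Lemma~\ref{L:virial} directly. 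The only $\psi'$ terms you actually meet are from $\partial_t\psi$ and from $\partial_x(-c^2w)$, and these combine into $-(\dot a - c^2)\int\psi' w^2$ as you say. Also, your bound on $2\epsilon\int\psi Vw^2$ should read $\epsilon\|w\|_{L^2}^2$ (not the localized norm), since $\psi$ is bounded but does not decay; this is what feeds the $\kappa_2\epsilon\|w\|_{H^1}^2$ term.
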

\begin{proof}
From the equation for $\partial_t w$, we have
\begin{align*}
\partial_t\int\psi w^2 =
& -\dot a\int\psi^\prime w^2 + 2\int\psi w\partial_t w\\
=& -\dot a\int\psi^\prime w^2 + 2\int\psi w\partial_x(\mathcal{L}w)
&&\leftarrow\text{I}+\text{II}\\
&- 2c^2\int\psi w\partial_x w - 12\int\psi w\partial_x(\eta
w^2)&&\leftarrow\text{III}+\text{IV}
\\ &- 4\int\psi w\partial_x(w^3) +
2\epsilon\int\psi Vw^2 - 2\int\psi wF_0
&&\leftarrow\text{V}+\text{VI}+\text{VII}
\end{align*}
Using integration by parts,
$$\text{III} = c^2\int\psi^\prime w^2\,,$$
hence
\begin{equation}
\label{E:v10} |\text{I}+\text{III}|=|-(\dot a-c^2)\int\psi^\prime
w^2|\lesssim
\epsilon\|w\|^2_{H^1_x}+\|e^{-\alpha|x-a|}w\|_{H^1_x}^2\|w\|^2_{H^1_x}
\end{equation}
by \eqref{E:eff-dyn2}. Following from the boundedness of $\psi$ and
$V$, and the estimate $\|w\|_{L^\infty_x}\lesssim\|w\|_{H^1_x}$, we
obtain
\begin{equation}
\label{E:v20}
\begin{aligned}
|\text{IV}|&\lesssim\|e^{-\alpha|x-a|}w\|_{H^1_x}^2\|w\|_{H^1_x}\,,\\
|\text{V}|=\big|3\int\psi' w^4\big|&\lesssim \|w\|_{H^1_x}^2\|e^{-|x-a|/(2A)}w\|_{L^2_x}^2\,,\\
|\text{VI}|&\lesssim \epsilon\|w\|_{H^1_x}^2\,,
\end{aligned}
\end{equation}
where for the second estimate we have used $\psi'=\Phi((x-a)/A)$ and
the definition of $\Phi$. Decomposing VII term as
$$\text{VII} =-2\int\psi w\Pi F_0 - 2\int\psi w\Pi^\perp F_0  = \text{VIIA}+\text{VIIB}\,,$$
we have by Lemma \ref{L:ODEs} that
\begin{equation}
\label{E:v30} \text{VIIA}\lesssim \epsilon\|w\|^2_{H^1_x} +
\|e^{-\alpha|x-a|}w\|_{H^1_x}^2\|w\|_{H^1_x}\,,
\end{equation}
and by $\Pi^\perp F_0\sim \epsilon\eta$ (see \eqref{E:F0-perp}) that for any $\mu>0$,
\begin{equation}
\label{E:v40} \text{VIIB} \lesssim
\epsilon\|e^{-\alpha|x-a|}w\|_{H^1_x} \lesssim \mu^{-1}\epsilon^2 + \mu \| e^{-\alpha|x-a|}w\|_{H_x^1}^2
\end{equation}
Note in above estimates the value of $\alpha$ may change from one
line to the next, but we can choose one single small enough $\alpha$
that works for all.

By Lemma \ref{L:virial}, we have
$$\text{II} = 2\la\psi w,\partial_x(\mathcal{L}w)\ra\leq
-\lambda_0\int(w_x^2+w^2)e^{-|x-a|/A}\,dx\,.$$  Combining with
\eqref{E:v10}, \eqref{E:v20}, \eqref{E:v30} and \eqref{E:v40}, the
estimate \eqref{E:app-virial-diff} follows by the smallness of
$\|w\|_{H^1_x}$, taking $A$ large enough
so that $1/(2A)<\alpha$, and $\mu>0$ suitably small.
\end{proof}

\section{Energy estimate}
\label{S:energy}

In this section we formulate the energy estimate
necessary for the estimation of the error term $w$. Recall
$\mathcal{L} = -\partial_x^2-6\eta^2+c^2$.  Let
$$\mathcal{E} = \frac12\la\mathcal{L}w,w\ra-2\int\eta w^3\,dx-\frac12\int w^4\,dx\,,$$

Note that $\mathcal{L}=H''_0(\eta)+c^2=L''(\eta)$, see \eqref{E:JH0-prime-2}
and \eqref{E:L-classical}.  We have classical coercivity properties
for $\mathcal{L}$ (for a proof, see e.g. \cite[Prop $2.9$]{W1} or
\cite[Prop $4.1$]{HZ1} for a more direct proof -- note that $\mathcal{L}$ is
the operator $L_+$ considered there):
\begin{lemma}[energy spectral estimate]
\label{L:coer}
Suppose that $w$ satisfies the orthogonality condition \eqref{E:orth}.  Then
\begin{equation}
\label{E:coer} \la \mathcal{L}w,w\ra \gtrsim \|w_x\|_{L^2}^2 + c^2\|w\|_{L_x^2}^2\,,
\end{equation}
\end{lemma}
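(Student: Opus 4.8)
The plan is to establish the coercivity of $\mathcal{L}$ on the subspace determined by the symplectic orthogonality conditions \eqref{E:orth}. First I would reduce everything to the canonical case $a=0$, $c=1$ by exploiting the scaling and translation structure of the soliton: writing $w(x)=c^{1/2}\tilde w(c(x-a))$ transforms $\la\mathcal{L}w,w\ra$ into a multiple of the corresponding quadratic form for the fixed operator $L_+=-\partial_x^2-6Q^2+1$ acting on $\tilde w$, and carries the orthogonality conditions \eqref{E:orth} into conditions on $\tilde w$ against fixed functions. This reduces the problem to a single spectral statement about $L_+$ with $Q=\sech$.

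Next I would recall the standard spectral decomposition of $L_+$. Since $Q$ solves $-Q+Q''+2Q^3=0$, differentiating gives $L_+(Q')=0$, so $Q'$ is in the kernel; one checks $L_+$ has exactly one negative eigenvalue with a positive ground-state eigenfunction, a zero eigenvalue spanned by $Q'$, and otherwise positive continuous spectrum bounded below by $1$. The translated orthogonality condition $\la w, J^{-1}\partial_a\eta\ra=0$ becomes $\la\tilde w,Q\ra=0$ (using $J^{-1}\partial_a\eta=-\eta$), and the condition $\la w,J^{-1}\partial_c\eta\ra=0$ becomes $\la\tilde w,xQ\ra=0$ (using $J^{-1}\partial_c\eta=c^{-1}(x-a)\eta$). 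The classical Weinstein-type argument then shows that these two orthogonality directions control the single negative mode and the null direction well enough to force $\la L_+\tilde w,\tilde w\ra\geq\lambda\|\tilde w\|_{L^2}^2$ for some $\lambda>0$, after handling the continuous spectrum. Since any such lower bound on $\|\tilde w\|_{L^2}^2$ combines with $\la L_+\tilde w,\tilde w\ra\geq\|\tilde w_x\|_{L^2}^2+\|\tilde w\|_{L^2}^2-6\|Q^2\tilde w\|_{L^2}\|\tilde w\|_{L^2}$ to recover the full $H^1$-coercivity \eqref{E:coer}, undoing the rescaling reinstates the $c^2$ weight on the $L^2$ term.

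The main obstacle is verifying that the two orthogonality conditions actually suppress the negative eigenvalue: one must confirm that the projections of the test functions $Q$ and $xQ$ onto the negative eigenspace, together with a spectral/variational inequality involving $L_+^{-1}$ on the orthogonal complement of the kernel, yield a strictly positive quadratic form. Concretely, the delicate point is the classical computation showing $\la L_+^{-1}Q,Q\ra<0$ (or the analogous determinant condition for the pair $\{Q,xQ\}$), which is exactly the scalar inequality underlying orbital stability for $p=3$. Rather than reprove this from scratch, I would cite the coercivity already established in \cite{W1} or \cite{HZ1}, noting as the statement does that their $L_+$ coincides with $\mathcal{L}$; the remaining work is the bookkeeping of the rescaling and the standard passage from an $L^2$ lower bound to the $H^1$ estimate.
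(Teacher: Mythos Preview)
Your proposal is correct and matches the paper's treatment: the paper gives no proof at all, simply citing \cite[Prop.~2.9]{W1} and \cite[Prop.~4.1]{HZ1} with the remark that $\mathcal{L}$ is the operator $L_+$ considered there. Your sketch spells out exactly the rescaling that identifies $\mathcal{L}$ with $L_+$, correctly translates the orthogonality conditions \eqref{E:orth} into $\la\tilde w,Q\ra=\la\tilde w,xQ\ra=0$, and then defers the key spectral inequality to the same references, so the two approaches coincide.
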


Since we impose a lower bound on $c$ in Theorems \ref{T:main1}, it
follows from \eqref{E:coer} that if $\|w\|_{H_x^1}$ is smaller than
some ($\epsilon$ independent) constant, then
$$\|w(t) \|_{H_x^1}^2 \sim \mathcal{E}(t)$$

\begin{lemma}[energy estimate]
\label{L:energy} Suppose we are given $V\in\mathcal{C}_b^1$,
$\delta_0>0$ and $w(x,t)$, such that $\delta_0<c(t)<\delta_0^{-1}$,
$w$ solves \eqref{E:lin-w} and satisfies the orthogonality
conditions \eqref{E:orth}, then
\begin{equation}
\label{E:energy}
|\partial_t\mathcal{E}|\lesssim\epsilon\|w\|^2_{H^1_x} +
\epsilon\|e^{-\alpha|x-a|} w\|_{H^1_x} + \|w\|_{H^1_x}^2
\|e^{-\alpha|x-a|}w\|_{H^1_x}^2 + \|w\|_{H^1_x}^6\,.
\end{equation}
where the implicit constant depends on $\delta_0$, $\sigma_0$ and
the bounds on $V$ and $V'$.
\end{lemma}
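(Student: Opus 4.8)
The plan is to recognize $\mathcal{E}$ as the increment of the classical Lyapunov functional $L(u)=H_0(u)+c^2P(u)$ from \eqref{E:L-classical}, and to exploit that $H_0$ and $P$ are conserved along the \emph{unperturbed} flow, so that after differentiating in $t$ only two manifestly small quantities survive.

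First I would record the exact identity $\mathcal{E}=L(u)-L(\eta)$, with $c=c(t)$ held as a time-dependent parameter inside $L$. This is immediate from expanding the quartic functional $L(\eta+w)$ and invoking $L'(\eta)=0$ (see \eqref{E:JH0-prime-2}) together with $L''(\eta)=\mathcal{L}$; the cubic and quartic remainders are precisely $-2\int\eta w^3-\frac{1}{2}\int w^4$. Writing $\partial_t u=JH_0'(u)+\epsilon Vu$ and using that $J=\partial_x$ is skew-adjoint, so that $\la H_0'(u),JH_0'(u)\ra=0$ and $\la u,JH_0'(u)\ra=0$ (conservation of $H_0$ and $P$), one finds $\partial_t L(u)=\epsilon\la L'(u),Vu\ra+2c\dot c\,P(u)$, while $L'(\eta)=0$ gives $\partial_t L(\eta)=2c\dot c\,P(\eta)$. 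Subtracting yields the crux formula
\[
\partial_t\mathcal{E}=\epsilon\la L'(u),Vu\ra+2c\dot c\,(P(u)-P(\eta)).
\]
Finally, the orthogonality \eqref{E:orth} reads $\int\eta w=0$ because $J^{-1}\partial_a\eta=-\eta$, whence $P(u)-P(\eta)=\int\eta w+\frac12\int w^2=\frac12\|w\|_{L^2_x}^2$.

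It then remains to estimate the two terms. Substituting $u=\eta+w$ and $L'(\eta)=0$ gives $L'(u)=\mathcal{L}w-6\eta w^2-2w^3$, so the only contribution to $\epsilon\la L'(u),Vu\ra$ that is linear in $w$ is $\epsilon\la\mathcal{L}w,V\eta\ra$; integrating by parts exactly once keeps only $V$ and $V'$ (this is where $V\in\mathcal{C}^1_b$ enters, rather than $\mathcal{C}^2$), and the exponential decay of $\eta$ and $\eta_x$ bounds it by $\epsilon\|e^{-\alpha|x-a|}w\|_{H^1_x}$. Every other term of $\epsilon\la L'(u),Vu\ra$ carries a factor $\epsilon$ and at least two powers of $w$, localized against $\eta$ or controlled by $\|w\|_{L^\infty_x}\lesssim\|w\|_{H^1_x}$, hence is absorbed into $\epsilon\|w\|_{H^1_x}^2$. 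For the parameter term, $|c\dot c|\,\|w\|_{L^2_x}^2\lesssim|\dot c|\,\|w\|_{H^1_x}^2$, and the effective dynamics \eqref{E:eff-dyn2} supply $|\dot c|\lesssim\epsilon+\|e^{-\alpha|x-a|}w\|_{H^1_x}^2$; this produces exactly $\epsilon\|w\|_{H^1_x}^2$ and $\|w\|_{H^1_x}^2\|e^{-\alpha|x-a|}w\|_{H^1_x}^2$. The crudest bookkeeping of the top-order nonlinear pieces (from the quartic part of $\mathcal{E}$ and the cubic nonlinearity) yields the $\|w\|_{H^1_x}^6$ contribution, which is why that term is retained even though it is dominated by the others under the smallness of $\|w\|_{H^1_x}$.

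The step demanding the most care is the derivative computation: one must verify that all the ``large'' contributions in which no derivative lands on $w$ (those of size $O(1)$ built from undifferentiated $\eta$) cancel, leaving only $O(\epsilon)$ terms and quadratic-in-$w$ terms localized near $x=a$. These cancellations are exactly the conservation of $H_0$ and $P$ under \eqref{E:mkdv} together with $L'(\eta)=0$, so I would deliberately organize the proof through the Lyapunov difference $L(u)-L(\eta)$ rather than substituting the $w$-equation \eqref{E:lin-w} into $\partial_t\mathcal{E}$ directly; the latter route hides the same cancellations behind several integrations by parts and, moreover, is sensitive to $V''$.
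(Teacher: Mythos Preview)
Your approach is correct and genuinely different from the paper's. The paper differentiates $\mathcal{E}$ directly and substitutes the $w$-equation \eqref{E:lin-w}, tracking five groups of terms (I--V, with I further split into IA--IE) and performing several integrations by parts until the ``large'' $O(1)$ pieces cancel pairwise; the surviving terms are then estimated using \eqref{E:eff-dyn2} and \eqref{E:F0-perp}. You instead exploit the identity $\mathcal{E}=L(u)-L(\eta)$ and the fact that $H_0$ and $P$ are conserved along the unperturbed flow, arriving in one stroke at the compact formula
\[
\partial_t\mathcal{E}=\epsilon\la L'(u),Vu\ra + c\dot c\,\|w\|_{L^2_x}^2,
\]
from which the estimate follows immediately. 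This is cleaner: all the cancellations the paper verifies by hand are packaged into $\la H_0'(u),JH_0'(u)\ra=\la u,JH_0'(u)\ra=0$ and $L'(\eta)=0$, and the role of $V\in\mathcal{C}^1_b$ (rather than $\mathcal{C}^2_b$) is transparent since the single integration by parts in $\epsilon\la\mathcal{L}w,V\eta\ra$ produces only $V'$. One minor remark: in your formulation every nonlinear term in $\epsilon\la L'(u),Vu\ra$ already carries the prefactor $\epsilon$, so the $\|w\|_{H^1_x}^6$ contribution you mention does not actually arise---it appears in the paper's route through the product $\la\partial_x(\eta w^2+w^3),\eta w^2+w^3\ra$ inside IC$+$IV, which your identity bypasses entirely. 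Keeping it on the right-hand side is of course harmless.
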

\begin{proof}
We compute
\begin{align*}
\partial_t\mathcal{E} = &\la\mathcal{L} w,\partial_t w\ra + \dot c
c\|w\|_{L^2_x}^2-6\la(\dot a\partial_a\eta + \dot
c\partial_c\eta)\eta w,
w\ra&&\leftarrow\text{I} + \text{II} +\text{III}\\
& -\la\partial_t w,6\eta w^2+2w^3\ra-2\la (\dot a\partial_a\eta+\dot
c\partial_c\eta),w^3\ra &&\leftarrow\text{IV}+\text{V}
\end{align*}
Substitute \eqref{E:lin-w} into I:
\begin{align*}
\text{I} &=
\la\mathcal{L}w,\partial_x(\mathcal{L}w)\ra-c^2\la\mathcal{L}w,\partial_xw\ra
- \la\mathcal{L}w,\partial_x(6\eta w^2 +2w^3)\ra
+\la\mathcal{L}w,\epsilon Vw\ra -\la\mathcal{L}w,F_0\ra\\
&=\text{IA}+\text{IB}+\text{IC}+\text{ID}+\text{IE}
\end{align*}
First, $\text{IA} = 0$. Integration by parts yields $\text{IB} =
-6c^2\la\eta\eta_x,w^2\ra$. By the boundedness of $V$ and $V'$,
$$\text{ID} \lesssim \epsilon\|w\|_{H^1_x}^2\,,$$
and since $\mathcal{L}(TM)\subset TM$ (by direct computation), we
have
$$\text{IE} = -\la\mathcal{L}w, \Pi F_0\ra
-\la\mathcal{L}w,\Pi^\perp F_0\ra=-\la\mathcal{L}w,\Pi^\perp
F_0\ra\,,$$ but by \eqref{E:F0-perp}
$$|\la\mathcal{L}w,\Pi^\perp F_0\ra |\lesssim \epsilon
\|e^{-\alpha|x-a|}w\|_{H^1_x}\,,$$ hence
$$|\text{IE}|\lesssim\epsilon
\|e^{-\alpha|x-a|}w\|_{H^1_x}\,.$$ Combining, we obtain
\begin{align}
\label{E:e10} \text{I} &= \text{IB} + \text{IC} +
O\left(\epsilon\|w\|_{H^1_x}^2+\epsilon\|e^{-\alpha|x-a|}w\|_{H^1_x}\right)\\
\notag&= -6c^2\la\eta\eta_x,w^2\ra -
\la\mathcal{L}w,\partial_x(6\eta w^2 +2w^3)\ra +
O\left(\epsilon\|w\|_{H^1_x}^2+\epsilon\|e^{-\alpha|x-a|}w\|_{H^1_x}\right)
\,.
\end{align}
Substituting \eqref{E:lin-w} into IV, we have
\begin{equation}
\label{E:IC+IV}\text{IC}+\text{IV} = -\left\la\partial_x(-c^2w-6\eta
w^2-2w^3) + \epsilon Vw - F_0\,, 6\eta w^2 + 2w^3\right\ra\,.
\end{equation}
By \eqref{E:eff-dyn2}, we have
\begin{equation}
\label{E:est-dot-ac}|\dot a-c^2|\lesssim\epsilon +
\|e^{-\alpha|x-a|}w\|_{H^1_x}^2\,,\qquad |\dot c|\lesssim\epsilon +
\|e^{-\alpha|x-a|}w\|_{H^1_x}^2\,,
\end{equation}
hence
$$|\la F_0\,, 6\eta w^2 +
2w^3\ra|\lesssim\epsilon\|w\|_{H^1_x}^2 +
\|w\|_{H^1_x}^2\|e^{-\alpha|x-a|}w\|_{H^1_x}^2\,.$$ Note
$$-\la\partial_x(-c^2 w), 6\eta w^2 + 2w^3\ra = -2c^2\int\eta' w^3\,dx\,.$$
Estimating the rest of the terms in \eqref{E:IC+IV} using
Cauchy-Schwarz and that $\|w\|_{L^\infty_x}\lesssim\|w\|_{H^1_x}$,
we obtain
\begin{equation}
\label{E:e20}\text{IC}+\text{IV} = -2c^2\la\eta',w^3\ra +
O(\epsilon\|w\|_{H^1_x}^2 +
\|e^{-\alpha|x-a|}w\|_{H^1_x}^2\|w\|_{H^1_x}^2 + \|w\|_{H^1_x}^6)\,.
\end{equation}
By \eqref{E:est-dot-ac} again, and that
$\partial_x\eta=-\partial_a\eta$, we have
\begin{equation}
\label{E:e30} \text{II}+\text{V} = 2\dot a\la\eta', w^3\ra +
O(\epsilon\|w\|_{H^1_x}^2 +
\|e^{-\alpha|x-a|}w\|_{H^1_x}^2\|w\|_{H^1_x}^2)\,,
\end{equation}
and
\begin{equation}
\begin{aligned}
\label{E:e40} \text{IB}+\text{III} =& 6(\dot
a-c^2)\la\eta\eta_x,w^2\ra-6\la\dot
c(\partial_c\eta)\eta,w^2\ra\\\lesssim&\epsilon\|w\|_{H^1_x}^2 +
\|e^{-\alpha|x-a|}w\|_{H^1_x}^2\|w\|_{H^1_x}^2\,.
\end{aligned}
\end{equation}

Apply \eqref{E:est-dot-ac} again to the sum of \eqref{E:e20} and
\eqref{E:e30}, then combine with \eqref{E:e10} and \eqref{E:e40}, we
can obtain \eqref{E:energy}.

\end{proof}

\section{Proof of the main theorems}
\label{S:pf-thm}

First, we give the proof of Theorem \ref{T:main1}.

Let $[0,T']$ be the maximal time interval so that
\begin{equation}
\label{E:bs-assump}
\|w\|_{L_{[0,T]}^\infty H_x^1} \leq \mu \la t \ra^{-1/4}
\end{equation}
for $\mu>0$ chosen small enough to ensure the validity of Lemmas
\ref{L:orth-decomp}, \ref{L:ODEs}, \ref{L:app-virial}, and
\ref{L:energy}, and also small enough to beat some constants in the
estimates that follow (as explained below).

Let
$$\mathcal{V}(t) \defeq \int_0^t \|e^{-\alpha |x-a(s)|} w(s) \|_{H_x^1}^2 \, ds \,, \qquad \mathcal{F}(t)
\defeq \sup_{0\leq s \leq t} \|w(s) \|_{H_x^1}^2 \,.$$
Integrating the local virial estimate \eqref{E:app-virial-diff} gives
\begin{equation}
\label{E:loc-vir-bs}
\mathcal{V}(t) \lesssim  \mathcal{F}(t) +  \epsilon^2 t +  \epsilon\int_0^t \mathcal{F}(s) \,ds \,.
\end{equation}
Integrating \eqref{E:energy} over $0\leq t\leq \tau$ yields
$$\mathcal{E}(\tau ) \leq \mathcal{E}(0) + \epsilon\int_0^\tau \mathcal{F}(s) \, ds +
\epsilon \tau^{1/2}\mathcal{V}(\tau) + \mathcal{F}(\tau)\mathcal{V}(\tau) + \tau\mathcal{F}(\tau)^3 \,.$$
Using that $\mathcal{E}(\tau)\sim \|w(\tau)\|_{H_x^1}^2$, and then taking the sup of the above estimate over
$0\leq \tau \leq t$, we obtain
$$\mathcal{F}(t) \lesssim \mathcal{F}(0)+ \epsilon \int_0^t \mathcal{F}(s) \,ds +  \epsilon t^{1/2} \mathcal{V}(t)^{1/2} +
\mathcal{F}(t)\mathcal{V}(t) +  t \mathcal{F}(t)^3$$ By
\eqref{E:bs-assump} and the estimate $\epsilon
t^{1/2}\mathcal{V}(t)^{1/2} \leq \mu^{-1}\epsilon^2 t + \mu
\mathcal{V}(t)$ this implies
$$\mathcal{F}(t) \lesssim  \epsilon \int_0^t \mathcal{F}(s) \,ds +
\mathcal{F}(0)+ \mu^{-1}\epsilon^2 t + \mu \mathcal{V}(t)$$
Substituting \eqref{E:loc-vir-bs} into here, taking $\mu$
(introduced in \eqref{E:bs-assump} above) small enough to beat the
implicit constants,
\begin{equation}
\label{E:step100}
\mathcal{F}(t) \lesssim  \epsilon \int_0^t \mathcal{F}(s) \,ds + \mathcal{F}(0) + \epsilon^2 t \,.
\end{equation}
Hence, for some $\kappa>0$,
$$\frac{d}{dt} \left(e^{-\kappa \epsilon t} \int_0^t \mathcal{F}(s) \,ds \right) \leq e^{-\kappa \epsilon t}
(\mathcal{F}(0)+\epsilon^2 t)$$
Integrating yields
$$ \int_0^t \mathcal{F}(s) \,ds \lesssim (e^{\kappa \epsilon t}-1)(\epsilon^{-1}\mathcal{F}(0) + 1)$$
Substituting this back into \eqref{E:step100},
$$\mathcal{F}(t) \lesssim e^{\kappa \epsilon t}\mathcal{F}(0)
+ \epsilon( (e^{\kappa \epsilon t}-1) + \epsilon t)$$
For the second term, we might as well bound $(e^{\kappa \epsilon t}-1) +
\epsilon t \lesssim \epsilon t e^{\kappa \epsilon t}$, so
$$\mathcal{F}(t) \lesssim e^{\kappa \epsilon t}(\mathcal{F}(0)+ \epsilon^2 t)$$
This enables us to reach time $\sigma \epsilon^{-1}\log
\epsilon^{-1}$, for $\sigma>0$ small, while still reinforcing the
bootstrap assumption \eqref{E:bs-assump}.  Returning to
\eqref{E:loc-vir-bs}, we obtain the bound for $\mathcal{V}(t)$, thus
completing the proof of \eqref{E:thm1-est-w}.  The $L^1_{[0,T]}$
estimates \eqref{E:thm1-est-par} follow from integrating
\eqref{E:eff-dyn2} in time and applying \eqref{E:thm1-est-w}. The
$L^\infty_{[0,T]}$ estimates also follow from \eqref{E:eff-dyn2} by
dropping the spatial localization in the terms on the right-hand
side of \eqref{E:eff-dyn2} and applying the bound on
$\|w\|_{L_{[0,T]}^\infty H_x^1}$ given by\eqref{E:thm1-est-w}.

Now we discuss the proof of Theorem \ref{T:main}.

Let $\tilde a$, $\tilde c$ solve the ODE system
\begin{equation*}
\left\{
\begin{aligned}
&\dot {\tilde a} - \tilde c^2 - \epsilon \tilde c^{-1}\la V\tilde\eta,
(x-\tilde a)\tilde\eta\ra = 0 \\
&\dot {\tilde c} - \epsilon\la V\tilde\eta, \tilde\eta\ra = 0
\end{aligned}
\right.
\end{equation*}
with initial data $\tilde a(0)=a_0$, $\tilde c(0) = c_0$, where
$\tilde \eta=\tilde c Q(\tilde c(x-\tilde a))$. Since $|\dot c|,\
|\dot{\tilde c}|\lesssim\epsilon$, we can assume $\delta_0<c,\
\tilde c<\delta_0^{-1}$ on $[0,T]$. Define
$$\bar a = a - \tilde a\,,\ \ \bar c = c - \tilde c\,,$$
we have
$$\la V\eta, (x-a)\eta\ra - \la V\tilde\eta,
(x-\tilde a)\tilde\eta\ra = \beta_1(a-\tilde a) + \beta_2(c-\tilde
c)\,,$$ where we have defined
\begin{equation*}
\begin{aligned}
&\beta_1 = \frac{1}{a-\tilde a}\int\left(V(\frac{x}{\tilde c}+ a) -
V(\frac{x}{\tilde c} + \tilde a)\right)
x\eta^2\,dx\,,\\
&\beta_2 = \frac{1}{c-\tilde c}\int\left(V(\frac{x}{c}+ a) -
V(\frac{x}{\tilde c} + a)\right)x\eta^2\,dx\,,
\end{aligned}
\end{equation*}
similarly,
$$\frac{1}{c}\la V\eta,\eta\ra - \frac{1}{\tilde c}\la V\tilde\eta,
\tilde\eta\ra = \gamma_1(a-\tilde a) + \gamma_2(c-\tilde c)\,,$$
where
\begin{equation*}
\begin{aligned}
&\gamma_1 = \frac{1}{a-\tilde a}\int\left(V(\frac{x}{\tilde c}+ a) -
V(\frac{x}{\tilde c} + \tilde a)\right)
\eta^2\,dx\,,\\
&\gamma_2 = \frac{1}{c-\tilde c}\int\left(V(\frac{x}{c}+ a) -
V(\frac{x}{\tilde c} + a)\right)\eta^2\,dx\,,
\end{aligned}
\end{equation*}
Denote $\mathcal{R}_1$, $\mathcal{R}_2$ for the error terms in Lemma
\ref{L:ODEs}, i.e.
\begin{equation*}
\left\{
\begin{aligned}
&\dot {  a} -   c^2 - \epsilon c^{-1}\la V \eta,
(x-  a) \eta\ra -\mathcal{R}_1 = 0 \\
&\dot {  c} - \epsilon\la V \eta,  \eta\ra - \mathcal{R}_2 = 0\,,
\end{aligned}
\right.
\end{equation*}
Apply \eqref{E:thm1-est-par} to \eqref{E:eff-dyn2}, we obtain
\begin{equation}
\label{E:est-R12} \|\mathcal{R}_j\|_{L_{[0,t]}^1} \leq C(\omega +
\epsilon t^{1/2})^2 e^{C\epsilon^{1/2}t}  \,,\ \ j=1\,,2\,.
\end{equation}
Note
$$\frac{\dot c}{c}-\frac{\dot{\tilde c}}{\tilde c}=\frac{\dot{\bar
c}}{c}-\frac{\dot{\tilde c}}{c\tilde c}\bar c\,,$$ and
$$c\dot a - \tilde c\dot{\tilde a} = c\dot{\bar a} + (c-\tilde
c)\dot{\tilde a}\,,$$ denoting
$$\theta_1 = \frac{1}{c}\left[(c^2+{\tilde c}^2+c\tilde c) - ({\tilde c}^2+\epsilon{\tilde
c}^{-1} \la V\tilde\eta,(x-\tilde a)\tilde\eta\ra) +
\epsilon\beta_2\right]\,,$$ and
$$\theta_2=\frac{1}{\epsilon}\frac{\dot{\tilde c}}{\tilde c}
=\frac{1}{\tilde c}\la V\tilde\eta,\tilde\eta\ra\,,$$ we can obtain
the equation for $(\bar a,\bar c)$,
\begin{equation}
\label{E:eqn-bar-ac}
\begin{bmatrix}
\bar a\\
\bar c
\end{bmatrix}^\prime
=
\begin{bmatrix}
\epsilon\beta_1 c^{-1} & \theta_1 \\
\epsilon c\gamma_1 & \epsilon(\theta_2+c\gamma_2)
\end{bmatrix}
\begin{bmatrix}
\bar a\\
\bar c
\end{bmatrix}
+
\begin{bmatrix}
\mathcal{R}_1\\
\mathcal{R}_2
\end{bmatrix}\,.
\end{equation}
Writing $$A(t) = \begin{bmatrix}
\epsilon\beta_1 c^{-1} & \theta_1 \\
\epsilon c\gamma_1 & \epsilon(\theta_2+c\gamma_2)
\end{bmatrix}\,.
$$
From the boundedness of $\beta_j$, $\gamma_j$, $\theta_j$, $j=1,2$,
which is a result of the boundedness of $V$, $V'$, $c$ and $\tilde
c$, we have the estimate
\begin{equation}
|A(t)| \lesssim
\begin{bmatrix}
\epsilon & 1\\
\epsilon & \epsilon
\end{bmatrix}\,.
\end{equation}
Writing $p(s)=(\epsilon \bar a^2+\bar c^2)^{1/2}$, then by above
estimate
\begin{align*}|\dot p| &\lesssim\frac{1}{p}\left[\epsilon |\bar a|(\epsilon |\bar a|+|\bar c|+|\mathcal{R}_1|) + |\bar
c|(\epsilon |\bar a|+\epsilon |\bar
c|+|\mathcal{R}_2|)\right]\\&\lesssim\frac{1}{p}\left[\epsilon(\epsilon
\bar a^2+\bar c^2) + \epsilon^{1/2}(\epsilon\bar a^2+\bar
c^2)+\epsilon|\bar a| |\mathcal{R}_1|+|\bar c||\mathcal{R}_2|\right]
\\&\lesssim \epsilon^{1/2}p+\epsilon^{1/2}|\mathcal{R}_1|+|\mathcal{R}_2|\,.
\end{align*}
By Gronwall and $p(0)=0$, we obtain $$p(t)\leq
Ce^{C\epsilon^{1/2}t}\int_0^t\left(\epsilon^{1/2}|\mathcal{R}_1|+|\mathcal{R}_2|\right)(s)
\,ds\,.$$ Applying \eqref{E:est-R12}, we obtain
$$p(t)\leq Ce^{C\epsilon^{1/2}t}(\omega+\epsilon
t^{1/2})^2\,,$$ recalling the bounds on $t$ and $\omega$ in
Theorem~\ref{T:main}, this gives
$$p(t)\leq C \epsilon^{1/2}(\omega+\epsilon t^{1/2})e^{C\epsilon^{1/2}t}\,.$$
The bounds on $\bar a$ and $\bar c$ now follow from the definition
of $p$:
$$|\bar a|\leq C(\omega+\epsilon t^{1/2}) e^{C\epsilon^{1/2}t}\,,$$
$$|\bar c|\leq C \epsilon^{1/2}(\omega+\epsilon t^{1/2}) e^{C\epsilon^{1/2}t}\,.$$
Compare the above two estimates with \eqref{E:thm-est-w}, we can
conclude the proof of Theorem \ref{T:main}.

\begin{remark}
\label{R:r10} The $\epsilon^{-1/2}$ constraint on the time scale
stems from the
fact that the eigenvalues of $\begin{bmatrix} \epsilon & 1\\
\epsilon &\epsilon\end{bmatrix}$ are only of order $\epsilon^{1/2}$.
\end{remark}

\appendix
\section{Local and global well-posedness}
\label{A:wp}

The global well-posedness for gKdV in energy space was obtained by
Kenig-Ponce-Vega in \cite{KPV}, where they introduced new and
powerful local smoothing and maximal function estimates, especially,
they proved the local well-posedness for \eqref{E:mkdv} in
$H^s(\mathbb{R})$ for $s\geq 1/4$. To prove well-posedness for
\eqref{E:pmkdv} at $H^1$ level of regularity, the full strength of
these estimates is not needed, we here follow the presentation of
\cite{HPZ} Apx. A and make necessary modifications.

Let $Q_n=[n-\tfrac12, n+\tfrac12]$, and $\tilde Q_n=[n-1,n+1]$. An
example of notation is:
$$\|u\|_{\ell_n^\infty L^2_T L^2_{Q_n}} =
\sup_n\|u\|_{L^2_{[0,T]}L^2_{Q_n}}\,.$$ Note that due to the finite
incidence of overlap, we have
$$\|u\|_{\ell_n^\infty L^2_T L^2_{Q_n}}\sim \|u\|_{\ell_n^\infty L^2_T L^2_{\tilde
Q_n}}\,.$$We omit the $\epsilon$ in \eqref{E:pmkdv}, and consider
\begin{equation}
\label{E:pmkdv2}
\partial_t u = \partial_x(-\partial_x^2 u -2u^3) +
V u\,, \qquad V\in \mathcal{C}^1_b\,.
\end{equation}
As in \cite{HPZ}, we first prove a local smoothing estimate and a
maximal function estimate (weak versions), by an integrating factor
method:
\begin{lemma}
\label{L:ls-mf} Suppose that
\begin{equation}
\label{E:linear-part} v_t + v_{xxx} - V v = f\,,
\end{equation}
then there exists $C>0$, such that if
$$T\leq C(1+\|V\|_{L^\infty_x})^{-1}\,,$$ we have the energy and
local smoothing estimates
\begin{equation}
\label{E:ls} \|v\|_{L^\infty_T L^2_x} + \|v_x\|_{\ell_n^\infty L^2_T
L^2_{Q_n}}\lesssim \|v_0\|_{L^2_x} +
\left\{\begin{aligned}&\|\partial_x^{-1}f\|_{\ell_n^1 L^2_T
L^2_{Q_n}}\\&\|f\|_{L^1_TL^2_x}\end{aligned}\right.
\end{equation}
and the maximal function estimate
\begin{equation}
\label{E:mf} \|v\|_{\ell_n^2 L^\infty_T L^2_{Q_n}} \lesssim
\|v_0\|_{L^2_x} + T^{1/2}\|v\|_{L^2_TH^1_x} +
T^{1/2}\|f\|_{L^2_TL^2_x}\,.
\end{equation}
The implicit constants are independent of $V$.
\end{lemma}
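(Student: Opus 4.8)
The plan is to prove all three bounds in \eqref{E:ls}--\eqref{E:mf} by elementary weighted-energy (integrating-factor) arguments for the constant-coefficient Airy operator $\partial_t+\partial_x^3$, treating the zeroth-order potential term $Vv$ as a lower-order perturbation that can be absorbed on the short time interval $T\lesssim(1+\|V\|_{L^\infty_x})^{-1}$. These are the \emph{weak} versions of the Kenig--Ponce--Vega local smoothing and maximal function estimates, so I would deliberately avoid the sharp oscillatory-integral machinery and instead pair \eqref{E:linear-part} against suitable multipliers. Throughout, the short-time restriction is what keeps every constant independent of $V$: each occurrence of $\int_0^T\int V v^2\,dx\,dt$ is bounded by $\|V\|_{L^\infty_x}\int_0^T\|v\|_{L^2_x}^2\,dt\lesssim T\|V\|_{L^\infty_x}\|v\|_{L^\infty_TL^2_x}^2$, which is absorbed once $T(1+\|V\|_{L^\infty_x})$ is small.

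For the energy and local smoothing estimate \eqref{E:ls} I would prove both halves together. Multiplying \eqref{E:linear-part} by $v$ and integrating gives $\tfrac12\frac{d}{dt}\|v\|_{L^2_x}^2=\int Vv^2+\int fv$ (since $\int v\,v_{xxx}\,dx=0$), yielding the $L^\infty_TL^2_x$ bound by Gronwall. For the smoothing gain I would pair against a translated bounded monotone weight: fix $\phi\in\mathcal C^\infty$ with $\phi'\geq0$, $\phi$ bounded, and $\phi'\gtrsim\mathbf 1_{[-1/2,1/2]}$, and test \eqref{E:linear-part} with $\phi(\cdot-n)\,v$. The key computation is the Airy identity $\int\phi\,v\,v_{xxx}\,dx=\tfrac32\int\phi'\,v_x^2\,dx-\tfrac12\int\phi'''\,v^2\,dx$, so after integrating in time the good term $\tfrac32\int_0^T\int\phi'(\cdot-n)\,v_x^2\,dx\,dt$ controls $\|v_x\|_{L^2_TL^2_{Q_n}}^2$ uniformly in $n$, while $\int\phi'''v^2$ is lower order. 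Taking the supremum over $n$ produces the $\ell^\infty_nL^2_TL^2_{Q_n}$ norm on the left. The forcing is handled in two ways matching the two branches on the right of \eqref{E:ls}: pairing $f$ directly with $\phi v$ and using $\|\phi\|_{L^\infty_x}$ gives the $\|f\|_{L^1_TL^2_x}$ branch, whereas integrating by parts to move $\partial_x$ onto $\phi v$ and pairing with $\partial_x^{-1}f$, then applying Cauchy--Schwarz on each box $Q_n$ followed by $\ell^1_n$--$\ell^\infty_n$ Hölder against the smoothing norm just obtained, gives the $\|\partial_x^{-1}f\|_{\ell^1_nL^2_TL^2_{Q_n}}$ branch.

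For the maximal function estimate \eqref{E:mf} I would use the Duhamel representation $v(t)=W(t)v_0+\int_0^tW(t-s)(Vv+f)(s)\,ds$, where $W(t)=e^{-t\partial_x^3}$, and apply the constant-coefficient free local maximal function and smoothing estimates to each piece; equivalently one can run a compactly supported local energy identity with cutoffs $\chi_n$ adapted to $Q_n\subset\tilde Q_n$. In either route the $T^{1/2}$ losses come from Hölder in time over $[0,T]$ (for the inhomogeneous term, $\int_0^T\|(Vv+f)(s)\|_{L^2_x}\,ds\leq T^{1/2}\|Vv+f\|_{L^2_TL^2_x}$) together with the fact that the free Airy flow is unitary and commutes with $\partial_x$, so $\int_0^T\|v_x\|_{L^2_x}^2\,dt=T\|v_{0,x}\|_{L^2_x}^2$ for the homogeneous part. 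Summing the resulting local bounds in $\ell^2_n$ is legitimate because the enlarged boxes $\tilde Q_n$ have finite overlap, and the $Vv$ contribution is again absorbed by the short-time condition.

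The hard part will be the local smoothing step: arranging the weight $\phi$ so that the gain $\int\phi'v_x^2$ is simultaneously positive, localizable to an \emph{arbitrary} unit box uniformly in $n$ (which forces $\phi$ to be bounded with derivative a fixed positive bump rather than globally coercive), while the remainder $\int\phi'''v^2$ stays genuinely lower order. Coupled to this is the bookkeeping for the $\partial_x^{-1}f$ branch, where the $\ell^1_n$ summability must be played off against the $\ell^\infty_n$ smoothing norm rather than naively summed, and the need to track that no constant secretly depends on $V$ except through the allowed short-time factor. These are exactly the points where I would follow the structure of \cite{HPZ} Apx.~A and make the modifications needed to accommodate the bounded potential $V$ in \eqref{E:pmkdv2}.
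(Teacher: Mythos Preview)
Your proposal is correct but the mechanism differs from the paper's. For \eqref{E:ls} you use the additive (Kato-type) multiplier: pair against $\phi(\cdot-n)v$ with $\phi$ bounded monotone, $\phi'$ a bump, and extract $\tfrac32\int\phi'v_x^2$ from the Airy identity. The paper instead uses a multiplicative integrating factor: set $w=e^{\phi(x)}v$ with $\phi(x)=-\arctan(x-n)$, derive the conjugated equation for $w$, and do plain $L^2$ energy on $w$; the smoothing gain appears as $-6\langle\phi',w_x^2\rangle=6\int\langle x-n\rangle^{-2}w_x^2$, and one converts back to $v$ via $e^{-\pi/2}\le e^\phi\le e^{\pi/2}$. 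Both routes are valid. The integrating-factor version has the practical advantage that the energy and smoothing norms emerge from a single identity, so the $\partial_x^{-1}f$ branch closes cleanly; in your sequential organization the plain-energy step for that branch already requires the smoothing norm ($\int fv=-\int(\partial_x^{-1}f)v_x$ must be paired with $\|v_x\|_{\ell^\infty_n L^2_TL^2_{Q_n}}$), so the two estimates have to be run together rather than one after the other. For \eqref{E:mf} the paper does \emph{not} use Duhamel: it again sets $w=\phi v$, now with $\phi$ a smooth bump equal to $1$ on $Q_n$ and supported in $\tilde Q_n$, runs the same localized $L^2$ computation, and sums in $\ell^2_n$ using finite overlap of the $\tilde Q_n$ --- this is exactly your ``equivalently'' alternative. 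Your Duhamel route is not how the paper proceeds, and the free $\ell^2_nL^\infty_TL^2_{Q_n}$ bound it would need is itself most naturally obtained by the cutoff-energy argument (note also that the right side of \eqref{E:mf} involves the full solution $\|v\|_{L^2_TH^1_x}$, not data alone, which is awkward to produce from Duhamel).
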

\begin{proof}
Let $\phi(x)=-\tan^{-1}(x-n)$, and set $w(x,t)=e^{\phi(x)}v(x,t)$.
By \eqref{E:linear-part},
$$\partial_t w + w_{xxx} - 3\phi' w_{xx}
+3(-\phi''+(\phi')^2)w_x+(-\phi'''+3\phi''\phi'-(\phi')^3)w-V
w=e^{\phi}f\,,$$ integrating its product with $\tfrac12 w$ over $x$,
$$\partial_t\|w\|_{L^2_x}^2 = -6\la\phi',w_x^2\ra +
\la-\phi'''+2(\phi')^3,w^2\ra + 2\la V,w^2\ra+2\la
e^{\phi}f,w\ra\,,$$ integrating this identity over $[0,T]$, and
using $\phi'(x)=-\la x-n\ra^{-2}$, we obtain
\begin{align*}
\|w(T)&\|^2_{L^2_x} + 6\|\la x-n\ra^{-1}w_x\|_{L^2_T L^2_x}^2\\&\leq
\|w_0\|^2_{L^2_x} + C_1 T(1+\|V\|_{L^\infty_x})\|w\|^2_{L^\infty_T
L^2_x} + C_1\int_0^{T}\left|\int e^{\phi} f w\,dx\right|\,dt\,,
\end{align*}
for some constant $C_1>0$, replace $T$ by $t$, and take supremum
over $t\in [0,T]$, we obtain, for $T\leq
\tfrac12C_1^{-1}(1+\|v\|_{L^\infty_x})$, the estimate
$$\|w\|^2_{L^\infty_T L^2_x} + \|\la x-n\ra^{-1}w_x\|_{L^2_T
L^2_x}^2\lesssim \|w_0\|^2_{L^2_x} + \int_0^{T}\left|\int e^{\phi} f
w\,dx\right|\,dt\,,$$ note that $0<e^{-\pi/2}\leq e^{\phi(x)}\leq
e^{\pi/2}<\infty$, we can convert the above estimate back to an
estimate for $v$:
$$\|v\|^2_{L^\infty_T L^2_x} + \|v_x\|_{L^2_T
L^2_{Q_n}}^2\lesssim \|v_0\|^2_{L^2_x} + \int_0^{T}\left|\int
e^{2\phi} f v\,dx\right|\,dt\,.$$ Estimating as
$$\int_0^{T}\left|\int
e^{2\phi} f v\,dx\right|\,dt\lesssim \|f\|_{L^1_T
L^2_x}\|v\|_{L^\infty_T L^2_x}\,,$$ and then taking the supremum in
$n$ yields the second estimate in \eqref{E:ls}. Estimating instead
as
$$\begin{aligned}\int_0^{T}\left|\int e^{2\phi} f v\,dx\right|\,dt
&= \int_0^{T}\left|\int(\partial_x^{-1}f
\partial_x(e^{2\phi} v)\,dx\right|\,dt \\&\leq \sum_m\|\partial_x^{-1}
f\|_{L^2_T L^2_{Q_m}}\|\la\partial_x\ra v\|_{L^2_T L^2_{Q_m}}\\&\leq
\|\partial_x^{-1}f\|_{\ell_m^1 L^2_T L^2_{Q_m}}\|\la\partial_x\ra
v\|_{\ell_m^\infty L^2_T L^2_{Q_m}} \end{aligned}$$ and then taking
the supremum in $n$ yields the second estimate in \eqref{E:ls}.

For the proof of estimate \eqref{E:mf}, take $\phi(x)=1$ on
$[n-\tfrac12,n+\tfrac12]$ and $0$ outside $[n-1,n+1]$, set $w =\phi
v$, and compute similarly as the above.

\end{proof}
Using estimates in the above lemma, we can prove:
\begin{theorem}[local well-posedness in $H^1_x$]
\label{T:lwp} Suppose that
\begin{equation}
\label{D:M} M\defeq \|V\|_{L^\infty_x}+\|V'\|_{L^\infty_x}
<\infty\,.
\end{equation}
For any $R\geq 1$, take
$$T\lesssim\min(M^{-1}, R^{-2})\,,$$ we have
\begin{enumerate}
    \item If $\|u_0\|_{H^1}\leq R$, there exists a solution $u(t)\in
    \mathcal{C}([0,T];H^1_x)$ to \eqref{E:pmkdv2} on $[0,T]$ with initial data
    $u_0(x)$ satisfying
    $$\|u\|_{L^\infty_T H^1_x} +\|u_{xx}\|_{\ell^\infty_n L^2_T
    L^2_{Q_n}}\lesssim R\,.$$
    \item This solution $u(t)$ is unique among all solutions in
    $\mathcal{C}([0,T];H^1_x)$.
    \item The data-to-solution map $u_0 \mapsto u(t)$ is continuous
    as a mapping $H^1 \rightarrow \mathcal{C}([0,T];H^1_x)$.
\end{enumerate}
\end{theorem}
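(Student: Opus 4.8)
The plan is to build the solution by a contraction-mapping (Picard) argument in a Kenig--Ponce--Vega-type function space tailored to Lemma \ref{L:ls-mf}, exactly in the spirit of \cite{HPZ} Apx.~A. I would work on $[0,T]\times\mathbb{R}$ with the norm
\[
\|u\|_{X_T}=\|u\|_{L^\infty_TH^1_x}+\|\partial_x^2u\|_{\ell^\infty_nL^2_TL^2_{Q_n}}+\|u\|_{\ell^2_nL^\infty_TL^2_{Q_n}}+\|\partial_xu\|_{\ell^2_nL^\infty_TL^2_{Q_n}}\,,
\]
the first two pieces being the quantities appearing in the conclusion of Theorem \ref{T:lwp} and the last two (maximal-function norms of $u$ and $\partial_xu$) being auxiliary quantities needed to control the nonlinearity. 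Writing \eqref{E:pmkdv2} as $\partial_tu+\partial_x^3u-Vu=-2\partial_x(u^3)$, so that its linear part matches \eqref{E:linear-part}, I would define $\Lambda u$ to be the solution $v$ of $v_t+v_{xxx}-Vv=-2\partial_x(u^3)$ with $v(0)=u_0$, and seek a fixed point of $\Lambda$ in a ball of radius $\sim R$ in $X_T$.

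To bound $\Lambda u$ I would feed $f=-2\partial_x(u^3)$ into Lemma \ref{L:ls-mf}, and separately feed the differentiated equation for $\partial_x(\Lambda u)$, namely $(v_x)_t+(v_x)_{xxx}-Vv_x=V'v-2\partial_x^2(u^3)$, back into the same lemma. The crucial device is to absorb the derivative sitting on the nonlinearity by invoking the $\partial_x^{-1}$ branch of the local smoothing estimate \eqref{E:ls}: since $\partial_x^{-1}\bigl(-2\partial_x(u^3)\bigr)=-2u^3$ and $\partial_x^{-1}\bigl(-2\partial_x^2(u^3)\bigr)=-2\partial_x(u^3)=-6u^2\partial_xu$, the relevant forcing norms become $\|u^3\|_{\ell^1_nL^2_TL^2_{Q_n}}$ and $\|u^2\partial_xu\|_{\ell^1_nL^2_TL^2_{Q_n}}$, neither of which costs a derivative. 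The lower-order term $V'v$ I would instead control through the $L^1_TL^2_x$ branch of \eqref{E:ls}, bounding $\|V'v\|_{L^1_TL^2_x}\lesssim TM\,\|v\|_{L^\infty_TL^2_x}$ and absorbing it on the left once $TM$ is small; this is precisely where the hypothesis $T\lesssim M^{-1}$ enters. The maximal-function estimate \eqref{E:mf}, applied to $v$ and to $\partial_xv$, then supplies the two auxiliary norms in $\|\cdot\|_{X_T}$.

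The heart of the matter is estimating the forcing norms $\|u^3\|_{\ell^1_nL^2_TL^2_{Q_n}}$ and $\|u^2\partial_xu\|_{\ell^1_nL^2_TL^2_{Q_n}}$ by $\|u\|_{X_T}$ with a positive power of $T$ to spare. The mechanism is to bound undifferentiated factors in $L^\infty_x$ by the one-dimensional local Sobolev inequality $\|u\|_{L^\infty_{Q_n}}\lesssim\|u\|_{L^2_{\tilde Q_n}}+\|\partial_xu\|_{L^2_{\tilde Q_n}}$, to place a remaining factor in a time-integrable local norm controlled either by the local smoothing gain or the maximal-function estimate, and then to carry out the sum in $n$ by pairing the $\ell^\infty_n$ control coming from the global $H^1$ embedding against the $\ell^2_n$ control coming from the maximal-function norms, so that the product lands in $\ell^1_n$. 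I expect this step --- arranging the H\"older distribution in $t$, $x$ and $n$ so that every factor is accounted for by some norm in $X_T$ while a genuine power of $T$ survives --- to be the main technical obstacle, precisely because the derivative on $\partial_x(u^3)$ forbids any naive $H^1$ energy estimate and forces one to exploit the local smoothing gain through the $\partial_x^{-1}$ structure of \eqref{E:ls}. Granting these estimates together with the linear bounds above, $\Lambda$ maps a ball of radius $\sim R$ into itself and is a contraction there once $T$ is small relative to $M$ and $R$, which is what $T\lesssim\min(M^{-1},R^{-2})$ provides; this yields the solution and the bound asserted in part (1).

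For uniqueness (part (2)) I would first observe that any solution $u\in\mathcal{C}([0,T];H^1_x)$ automatically lies in $X_T$: applying Lemma \ref{L:ls-mf} to $u$ with its own nonlinearity as forcing, together with the nonlinear estimates above, shows the auxiliary norms are finite. The difference $d=u-u'$ of two such solutions then solves a linear problem with forcing $-2\partial_x\bigl((u^2+uu'+(u')^2)d\bigr)$, and the same $\partial_x^{-1}$ machinery gives $\|d\|_{X_{T'}}\lesssim (T')^{1/2}\bigl(\|u\|_{X_{T'}}^2+\|u'\|_{X_{T'}}^2\bigr)\|d\|_{X_{T'}}$ on a short interval $[0,T']$, forcing $d\equiv0$ there; iterating covers $[0,T]$. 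Finally, continuity of the data-to-solution map (part (3)) follows from the same difference estimate: since $T$ may be chosen uniformly over data with $\|u_0\|_{H^1_x}\le R$, the fixed point depends Lipschitz-continuously on the data, so $\|u-u'\|_{X_T}\lesssim\|u_0-u_0'\|_{H^1_x}$, which in particular gives continuity as a map $H^1_x\to\mathcal{C}([0,T];H^1_x)$.
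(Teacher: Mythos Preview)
Your approach is essentially the paper's: fixed-point in a space built from the energy norm, the local smoothing norm, and the maximal-function norm, with the $\partial_x^{-1}$ branch of \eqref{E:ls} used at the differentiated level to eat the derivative on $\partial_x^2(u^3)$, and the $L^1_TL^2_x$ branch used for the $V'v$ commutator. The nonlinear estimate you sketch (local Sobolev plus H\"older in $t$, $x$, $n$) is exactly the computation the paper carries out, yielding $\|(u^3)_x\|_{\ell^1_nL^2_TL^2_{Q_n}}\lesssim T^{1/2}\|u\|_{L^\infty_TH^1_x}^2\|u\|_{\ell^2_nL^\infty_TL^2_{Q_n}}$.

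There is one genuine hiccup. You include $\|\partial_xu\|_{\ell^2_nL^\infty_TL^2_{Q_n}}$ in $X_T$ and propose to recover it by applying \eqref{E:mf} to $\partial_xv$. But \eqref{E:mf} applied to $\partial_xv$ demands $\|\partial_xv\|_{L^2_TH^1_x}$ on the right, hence $\|v_{xx}\|_{L^2_TL^2_x}$, and also $\|\partial_x^2(u^3)\|_{L^2_TL^2_x}$, which contains $u^2u_{xx}$; neither global $L^2_x$ norm of a second derivative is controlled by your $X_T$ (local smoothing only gives $\ell^\infty_n$, not $\ell^2_n$, control of $v_{xx}$). So this piece of the plan fails as stated. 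The fix is simply to drop $\|\partial_xu\|_{\ell^2_nL^\infty_TL^2_{Q_n}}$ from $X_T$: the paper's space carries only the three norms $\|u\|_{L^\infty_TH^1_x}+\|u_{xx}\|_{\ell^\infty_nL^2_TL^2_{Q_n}}+\|u\|_{\ell^2_nL^\infty_TL^2_{Q_n}}$, and the nonlinear estimate above shows that the maximal-function norm of $u$ alone (not of $\partial_xu$) suffices to close. A secondary simplification: at the undifferentiated level the paper uses the $L^1_TL^2_x$ branch of \eqref{E:ls} with the elementary bound $\|(u^3)_x\|_{L^1_TL^2_x}\lesssim T\|u\|_{L^\infty_TH^1_x}^3$, which is quicker than invoking $\partial_x^{-1}$ there, though your route works too.
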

\begin{proof}
We prove the existence by contraction in the space $X$, where
$$X = \{\,u\,|\, \|u\|_{\mathcal{C}([0,T];H^1_x)} + \|u_{xx}\|_{\ell^\infty_n L^2_T
    L^2_{Q_n}} + \|u\|_{\ell^2_n L^\infty_T
    L^2_{Q_n}}\leq CR\,\}\,,$$
where the constant $C$ is chosen large enough to ($10$ times, say)
exceed the implicit constants in Lemma~\ref{L:ls-mf}. Given $u\in
X$, let $\varphi(u)$ denote the solution to
\begin{equation}
\label{E:psi-u}
\partial_t\varphi(u) + \partial_x^3\varphi(u) - V\varphi(u) =
-2\partial_x(u^3)\,.
\end{equation}
with initial condition $\varphi(u)(0) = u_0$. A fixed point
$\varphi(u)=u$ in $X$ will solve \eqref{E:pmkdv2}.

The local smoothing estimate \eqref{E:ls} applied to $v=\varphi(u)$
and the estimate
$$\|(u^3)_x\|_{L^1_T L^2_x}\lesssim T\|u\|_{L^\infty_T H^1_x}^3$$
give the estimate
\begin{equation}
\label{E:ls-bd1} \|\varphi(u)\|_{L^\infty_T L^2_x}\lesssim
\|u_0\|_{H^1_x} + T\|u\|_{L^\infty_T H^1_x}^3\,,
\end{equation}
The maximal function estimate \eqref{E:mf} applied to $v=\varphi(u)$
and the estimate
$$\|(u^3)_x\|_{L^2_T L^2_x}\lesssim T^{1/2}\|u\|_{L^\infty_T H^1_x}^3$$
imply the estimate
\begin{equation}
\label{E:mf-bd} \|\varphi(u)\|_{\ell_n^2 L^\infty_T L^2_{Q_n}}
\lesssim \|u_0\|_{L^2_x} + T\|\varphi(u)\|_{L^\infty_T H^1_x} +
T\|u\|_{L^\infty_T H^1_x}^3\,.
\end{equation}

Now applying $\partial_x$ to \eqref{E:psi-u}, and denoting
$v=\varphi(u)_x$ instead:
$$v_t + v_{xxx} - V v = -2(u^3)_{xx} + V'\varphi(u)\,.$$
By \eqref{E:ls} again,
\begin{equation}
\label{E:psi-u-ls} \|\varphi(u)_x\|_{L^\infty_T L^2_x} +
\|\varphi(u)_{xx}\|_{\ell_n^\infty L^2_T L^2_{Q_n}}\lesssim
\|u_0\|_{H^1_x} + \|(u^3)_x\|_{\ell_n^1 L^2_T
L^2_{Q_n}}+\|V'\varphi(u)\|_{L^1_TL^2_x}\,.
\end{equation}
Applying Gagliado-Nirenberg inequality to $\phi(x)u$, where
$\phi(x)=1$ on $[n-\tfrac12,n+\tfrac12]$ and $0$ outside
$[n-1,n+1]$, we obtain (writing $Q$ for $Q_n$ and $\tilde Q$ for
$\tilde Q_n$ for the following):
$$\|u\|_{L^\infty_Q}^2\lesssim (\|u\|_{L^2_{\tilde Q}} + \|u_x\|_{L^2_{\tilde
Q}})\|u\|_{L^2_{\tilde Q}}\,,$$ hence
$$\|(u^3)_x\|_{L^2_Q}\lesssim
\|u_x\|_{L^2_Q}\|u\|_{L^\infty_Q}^2\lesssim
\|u_x\|_{L^2_Q}\|u\|_{L^2_{\tilde Q}}(\|u\|_{L^2_{\tilde Q}} +
\|u_x\|_{L^2_{\tilde Q}})\,.$$ Taking $L^2_T$ norm and applying the
H\"{o}lder inequality, we obtain
$$\|(u^3)_x\|_{L^2_T L^2_Q} \lesssim
\|u_x\|_{L^\infty_T L^2_Q}\|u\|_{L^\infty_T L^2_{\tilde
Q}}(\|u\|_{L^2_TL^2_{\tilde Q}} + \|u_x\|_{L^2_TL^2_{\tilde
Q}})\,.$$ Taking $\ell^1_n$ norm and applying the H\"{o}lder
inequality again yields
$$\|(u^3)_x\|_{\ell^1_nL^2_T L^2_Q} \lesssim
\|u_x\|_{\ell^\infty_nL^\infty_T L^2_Q}\|u\|_{\ell^2_nL^\infty_T
L^2_{\tilde Q}}(\|u\|_{\ell^2_nL^2_TL^2_{\tilde Q}} +
\|u_x\|_{\ell^2_nL^2_TL^2_{\tilde Q}})\,.$$ Using the bounds
$\|u_x\|_{\ell^\infty_nL^\infty_T L^2_Q}\lesssim \|u_x\|_{L^\infty_T
L^2_x}$,
$$\|u\|_{\ell^2_nL^2_TL^2_{\tilde Q}}\lesssim
\|u\|_{L^2_TL^2_x}\lesssim T^{1/2}\|u\|_{L^\infty_TL^2_x}$$ and
$$\|u_x\|_{\ell^2_nL^2_TL^2_{\tilde Q}}\lesssim
\|u_x\|_{L^2_TL^2_x}\lesssim T^{1/2}\|u_x\|_{L^\infty_TL^2_x}\,,$$
we obtain
$$\|(u^3)_x\|_{\ell_n^1 L^2_T L^2_{Q_n}}\lesssim
T^{1/2}\|u\|_{L^\infty_T H^1_x}^2\|u\|_{\ell_n^2 L^\infty_T
L^2_{Q_n}}\,,$$ inserting into \eqref{E:psi-u-ls},
\begin{equation}
\label{E:ls-bd2}
\begin{aligned}
 \|\varphi(u)_x\|&_{L^\infty_T L^2_x} + \|\varphi(u)_{xx}\|_{\ell_n^\infty L^2_T
L^2_{Q_n}}\\&\lesssim \|u_0\|_{H^1_x} + T^{1/2}\|u\|_{L^\infty_T
H^1_x}^2\|u\|_{\ell_n^2 L^\infty_T L^2_{Q_n}} +
T\|V'\|_{L^\infty_x}\|\varphi(u)\|_{L^\infty_T L^2_x}\,.
\end{aligned}
\end{equation}
Summing \eqref{E:ls-bd1}, \eqref{E:mf-bd} and \eqref{E:ls-bd2}, we
obtain that $\|\varphi(u)\|_X\leq CR$ if $\|u\|_X\leq CR$ provided
$T\leq C_0\min(M^{-1}, R^{-2})$, with $C_0$ small enough. Thus
$\varphi:\ X\rightarrow X$. A similar argument establishes that
$\varphi$ is a contraction on $X$.

Now suppose $u,v\in\mathcal{C}([0,T];H^1_x)$ solve \eqref{E:pmkdv2}.
By \eqref{E:mf},
\begin{equation}
\label{E:uv-mf}
\begin{aligned}\|u\|_{\ell_n^2 L^\infty_T
L^2_{Q_n}}&\lesssim\|u_0\|_{L^2_x} + T\|u\|_{L^\infty_T H^1_x} +
T\|u\|_{L^\infty_T H^1_x}^3\,,\\
\|v\|_{\ell_n^2 L^\infty_T L^2_{Q_n}}&\lesssim\|v_0\|_{L^2_x} +
T\|v\|_{L^\infty_T H^1_x} + T\|v\|_{L^\infty_T H^1_x}^3\,,
\end{aligned}
\end{equation}
Set $w=u-v$. Then, with $g=(u^3-v^3)/(u-v)=u^2+uv+v^2$, we have
\begin{equation*}
w_t+w_{xxx}+2(gw)_x-Vw=0\,.
\end{equation*}
Apply \eqref{E:ls} to $v=w_x$, we obtain
\begin{equation}
\label{E:w-ls} \|w_x\|_{L^\infty_T
L^2_x}\lesssim\|(gw)_x\|_{\ell_n^1 L^2_T L^2_{Q_n}} + \|V'w\|_{L^1_T
L^2_x}\,.
\end{equation}
The terms of $\|(gw)_x\|_{\ell_n^1 L^2_T L^2_{Q_n}}$ can be bounded
in the following manner:
\begin{align}
\label{E:apx-100}\|u_x v w\|_{\ell_n^1 L^2_T L^2_{Q_n}} &\lesssim
\|u_x\|_{\ell_n^\infty L^\infty_T L^2_{Q_n}}\|vw\|_{\ell_n^1 L^2_T
L^\infty_{Q_n}}\\
\notag &\lesssim\|u_x\|_{\ell_n^\infty L^\infty_T
L^2_{Q_n}}(\|vw\|_{\ell_n^1 L^2_T L^1_{Q_n}}+\|(vw)_x\|_{\ell_n^1
L^2_T L^1_{Q_n}})
\end{align}
The term in the parentheses is bounded by
$$\|v\|_{\ell_n^2 L^2_T L^2_{Q_n}}\|w\|_{\ell_n^2 L^\infty_T
L^2_{Q_n}} + \|v_x\|_{\ell_n^2 L^2_T L^2_{Q_n}}\|w\|_{\ell_n^2
L^\infty_T L^2_{Q_n}} + \|v\|_{\ell_n^2 L^\infty_T
L^2_{Q_n}}\|w_x\|_{\ell_n^2 L^2_T L^2_{Q_n}}$$ which by
\eqref{E:uv-mf} and
$$\|u_x\|_{\ell_n^\infty L^\infty_T L^2_{Q_n}}\lesssim\|u\|_{L^\infty_T
H^1_x}\,,\qquad \|v\|_{\ell_n^2 L^2_T L^2_{Q_n}}\lesssim
T^{1/2}\|v\|_{L^\infty_T L^2_x}$$ implies
$$\|u_x v w\|_{\ell_n^1 L^2_T L^2_{Q_n}} \lesssim_{\|u\|_{L^\infty_T
H^1_x},\|v\|_{L^\infty_T H^1_x}}T^{1/2}(\|w\|_{\ell_n^2 L^\infty_T
L^2_{Q_n}} + \|w\|_{L^\infty_T H^1_x})\,.$$ Same bounds follow for
other terms in $\|(gw)_x\|_{\ell_n^1 L^2_T L^2_{Q_n}}$, combined
with $\|V'w\|_{L^1_T L^2_x}\lesssim
T\|V'\|_{L^\infty_x}\|w\|_{L^\infty_T H^1_x}$, this establishes the
estimate
$$\|w_x\|_{L^\infty_T L^2_x}\lesssim T^{1/2}(\|w\|_{\ell_n^2 L^\infty_T
L^2_{Q_n}} + \|w\|_{L^\infty_T H^1_x})\,,$$ where the implicit
constant depends on $\|u\|_{L^\infty_T H^1_x}$ and
$\|v\|_{L^\infty_T H^1_x}$. Same estimate follows for
$\|w\|_{L^\infty_T L^2_x}$ by applying \eqref{E:ls} to $v=w$. Hence
\begin{equation}
\label{E:apx-bdd-w1} \|w\|_{L^\infty_T H^1_x}\lesssim
T^{1/2}(\|w\|_{\ell_n^2 L^\infty_T L^2_{Q_n}} + \|w\|_{L^\infty_T
H^1_x})\,,
\end{equation}
but applying \eqref{E:mf} to $v=w$ yields
\begin{equation}
\label{E:apx-200} \|w\|_{\ell_n^2 L^\infty_T L^2_{Q_n}}\lesssim
T\|w\|_{L^\infty_T H^1_x}
\end{equation}
since e.g.
$$\|uvw_x\|_{L^2_TL^2_x}\lesssim T^{1/2}\|w\|_{L^\infty_T
H^1_x}\|u\|_{L^\infty_T H^1_x}\|v\|_{L^\infty_T H^1_x}$$ which can
be proved by the same method as in \eqref{E:apx-100}, and thus
$\|(gw)_x\|_{L^2_TL^2_x}\lesssim T^{1/2}\|w\|_{L^\infty_T H^1_x}$.
Substituting \eqref{E:apx-200} into \eqref{E:apx-bdd-w1} implies
$w\equiv0$ for $T$ sufficiently small, which then establishes the
uniqueness of solutions in $\mathcal{C}([0,T];H^1_x)$. The
continuity of the data-to-solution map can be proved by a similar
argument.
\end{proof}
We now prove the global well-posedness in $H^1$ by (almost)
conservation laws.
\begin{theorem}[global well-posedness]
Suppose $M<\infty$, where M is defined in \eqref{D:M}, for $u_0\in
H^1$, there is a unique global solution $u\in
C_{loc}([0,\infty);H^1_x)$ to \eqref{E:pmkdv2} with
$\|u\|_{L^\infty_T H^1_x}$ controlled by $\|u_0\|_{H^1}$, $T$ and
$M$.
\end{theorem}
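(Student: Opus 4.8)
The plan is to promote the local solution furnished by Theorem~\ref{T:lwp} to a global one by establishing an a priori bound on $\|u(t)\|_{H^1_x}$ over any finite interval $[0,T]$, with the bound depending only on $\|u_0\|_{H^1}$, $T$ and $M$. Since the local existence time in Theorem~\ref{T:lwp} depends only on $M$ and on the size of the solution in $H^1_x$ at the initial instant, such a bound lets us reapply the local theory on successive subintervals whose lengths are uniformly bounded below on $[0,T]$, and thereby continue the solution across all of $[0,T]$; letting $T\to\infty$ gives the global solution, and uniqueness is inherited from the local statement. The substance of the argument is thus the derivation of the a priori bound from the \emph{almost} conservation laws for the perturbed equation \eqref{E:pmkdv2}.

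First I would control the $L^2_x$ norm using the momentum $P(u)=\tfrac12\int u^2$. Differentiating along the flow, the Hamiltonian part $\partial_x(-u_{xx}-2u^3)$ contributes a perfect $x$-derivative and integrates to zero, so that only the perturbation survives:
\[
\frac{d}{dt}P(u) = \int V u^2\,dx \leq 2\|V\|_{L^\infty_x}\,P(u)\,,
\]
and Gronwall gives $\|u(t)\|_{L^2_x}^2 \leq \|u_0\|_{L^2_x}^2\,e^{2Mt}$.

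Next I would control $\|u_x\|_{L^2_x}$ through the energy $H_0(u)=\tfrac12\int(u_x^2-u^4)$. Since $J=\partial_x$ is skew-adjoint, the purely Hamiltonian evolution again conserves $H_0$, and differentiating leaves only the contribution of $Vu$, after one integration by parts in the term coming from $-u_{xx}$:
\[
\frac{d}{dt}H_0(u) = \la H_0'(u), Vu\ra = \int V'u\,u_x\,dx + \int V u_x^2\,dx - 2\int V u^4\,dx\,.
\]
Each term is bounded by $M$ times products of $\|u\|_{L^2_x}$ and $\|u_x\|_{L^2_x}$, the quartic term being estimated through the Gagliardo--Nirenberg inequality $\|u\|_{L^4_x}^4 \lesssim \|u\|_{L^2_x}^3\|u_x\|_{L^2_x}$. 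Reading Gagliardo--Nirenberg together with Young's inequality in the reverse direction gives the coercivity bound $\|u_x\|_{L^2_x}^2 \lesssim H_0(u) + \|u\|_{L^2_x}^6$, so that the differential inequality for $H_0$, fed by the already-controlled quantity $\|u\|_{L^2_x}$ from the previous step, closes by Gronwall and yields the desired bound on $\|u\|_{L^\infty_T H^1_x}$ in terms of $\|u_0\|_{H^1}$, $T$ and $M$.

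I expect the main obstacle to be the energy step rather than the momentum step: because $Vu$ is non-Hamiltonian, $H_0$ is not conserved, and the term $\int V u_x^2$ reproduces precisely the quantity $\|u_x\|_{L^2_x}^2$ that we are trying to bound. The estimate therefore cannot rely on exact conservation and must be closed via Gronwall, with the quartic term $\int V u^4$ absorbed carefully through Gagliardo--Nirenberg and Young so that no uncontrolled power of $\|u_x\|_{L^2_x}$ appears at leading order. Once the a priori bound is in hand, the continuation step is routine.
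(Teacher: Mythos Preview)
Your proposal is correct and follows essentially the same route as the paper: control $P(u)$ by Gronwall to bound $\|u\|_{L^2_x}$, then control $H_0(u)$ by computing $\tfrac{d}{dt}H_0(u)=\la H_0'(u),Vu\ra$, bounding it via Gagliardo--Nirenberg and the coercivity $\|u_x\|_{L^2_x}^2\lesssim H_0(u)+\|u\|_{L^2_x}^6$, and closing with a second Gronwall. The paper presents the two steps in the reverse order but the content is identical.
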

\begin{proof}
First, note from Gagliado-Nirenberg inequality,
$\|u\|_{L^4}^4\lesssim \|u\|_{L^2}^3 \|u_x\|_{L^2}$, we have
$$\|u_x\|_{L^2}^2 - \|u\|_{L^2}^3 \|u_x\|_{L^2} \leq H_0(u) \leq
\|u_x\|_{L^2}^2\,.$$ Applying Peter-Paul inequality to the
$\|u\|_{L^2}^3 \|u_x\|_{L^2}$ term gives us
$$\|u_x\|_{L^2}^2 + \|u\|_{L^2}^6\sim H_0(u) + \|u\|_{L^2}^6\,.$$
Suppose $u$ solves \eqref{E:pmkdv2}, then
\begin{equation}
\label{E:der-H0}
\begin{aligned}&\ \ \ \left|\frac{d}{dt} H_0(u)\right| = \left|\la
H_0'(u),JH_0'(u)+Vu\ra\right| = \left|\la
H_0'(u),Vu\ra\right|\\&\lesssim M(\|u_x\|_{L^2}^2 + \|u\|_{L^2}^2 +
\|u\|_{L^4}^4)\lesssim M(\|u_x\|_{L^2}^2 + \|u\|_{L^2}^2 +
\|u\|_{L^2}^3 \|u_x\|_{L^2})\\& \lesssim M(\|u_x\|_{L^2}^2 +
\|u\|_{L^2}^2 + \|u\|_{L^2}^6)\lesssim M(H_0(u)+ \|u\|_{L^2}^2 +
\|u\|_{L^2}^6)\,,
\end{aligned}
\end{equation}
on the other hand, by
\begin{equation*}
\left|\frac{d}{dt} P(u)\right| = \left|\la u , Vu\ra\right|\lesssim
MP(u)\,,
\end{equation*}
and Gronwall inequality, we obtain a bound on $\|u\|_{L^\infty_T
L^2_x}$ in terms of $\|u_0\|_{L^2}$ and $M$, combine this with
\eqref{E:der-H0}, and apply Gronwall again, we obtain a bound on
$H_0(u)$ and hence $\|u\|_{H^1_x}$.
\end{proof}
\begin{remark}
A global well-posedness in $H^k_x$ for $k\geq 1$ can in fact be
proved, provided $V\in\mathcal{C}^k_b$, by similar arguments.
\end{remark}

\end{document}